\newcommand{\be}{{\bf e}}
\newcommand{\bi}{{\bf i}}
\newcommand{\bc}{{\mathbb C}}
\newcommand{\bn}{{\mathbb N}}
\newcommand{\bz}{{\mathbb Z}}
\newcommand{\br}{{\mathbb R}}
\newcommand{\bh}{{\mathbb H}}
\newtheorem{thm}{Theorem}[section]
\newtheorem{pro}{Proposition}[section]
\newtheorem{defi}{Definition}[section]
\newtheorem{exam}{Example}[section]
\begin{document}

\title{The Euler's formula and root formula of Clifford Algebra $C\ell_2$}
\author{Wensheng Cao, Huihui Cao, Ronglan Zheng}
\date{}
\maketitle

\bigskip
{\bf Abstract.} \,\,  In this paper, we express Euler's formula and De Moivre's formula for Clifford algebra $C\ell_2$ and find nth roots of an element in  Clifford algebra $C\ell_2$.

\vspace{1mm}\baselineskip 12pt

{\bf Mathematics Subject Classification.}\ \ Primary 15A24; Secondary  15A33.

{\bf Keywords.} \ \ Euler's formula,  De Moivre's formula,  Roots of Clifford algebra $C\ell_2$

\section{Introduction}

Clifford introduced a new multiplication rule into Grassmann's exterior algebra, by means of an orthonormal basis, and finally extended them to Clifford algebra $C\ell_{p,q}$.
\begin{defi}(c.f.\cite{Lou}) The Clifford algebra  $C\ell_{p,q}$ with $p+q=n$  is generated by the orthonormal basis $\{i_1,\cdots,i_n\}$ of $\br^{p,q}$ with  the multiplication rules
	\begin{equation}\label{mulrul}i_t^2=1,\,1\le t\le p,\,i_t^2=-1,\,p<t\le n,\,i_ti_m=-i_mi_t,\,t<m.\end{equation}
\end{defi}
 Clifford algebra is a generalization of complex number algebra, quaternion algebra and outer algebra, which integrates the two operations of inner product and outer product. In recent years, Clifford algebra has made brilliant achievements in differential geometry, theoretical physics, classical analysis, etc. It is a core tool of studying modern theoretical mathematics and physics\cite{lib11, Lou,moz06,Moz}.

 Let $\br,\bc$, $\bh$ and $\bh_s$  be  respectively the real numbers, the complex numbers, the quaternions and the split quaternions.  Then we have $\bc\cong C\ell_{0,1}$ , $\bh\cong C\ell_{0,2}$ and $\bh_s\cong C\ell_{1,1}$.

In this paper we focus on the Clifford algebra $C\ell_{2}:=C\ell_{2,0}$.   The basis of $C\ell_{2}$ over $\br$ is
\begin{equation}\label{basise}
	\be_0=1,\,\be_1=i_1,\,\be_2=i_2,\,\be_3=i_1i_2,
\end{equation}
which satisfies
\begin{equation}\label{rule}
\be_1^2=\be_2^2=-\be_3^2=1,\,\, \be_1\be_2=\be_3=-\be_2\be_1,\,\, \be_2\be_3=-\be_1=-\be_3\be_2,\,\, \be_3\be_1=-\be_2=-\be_1\be_3.
\end{equation}

\begin{defi}
For $a=a_0+a_1\be_1+a_2\be_2+a_3\be_3\in C\ell_2$,  where $a_i\in \br,i=0,1,2,3$. We define associated notations and maps of $a$:

the conjugate of $a$: $\overline{a}=a_0-a_1\be_1-a_2\be_2-a_3\be_3$;

the real part of $a$: $\Re(a)=(a+\overline{a})/2=a_0$;

the imaginary part of $a$: $\Im(a)=(a-\overline{a})/2=a_1\be_1+a_2\be_2+a_3\be_3$;

the map $I$: $C\ell_2\longmapsto\br$: $I_a=\overline{a}a=a\overline{a}=a_0^2-a_1^2-a_2^2+a_3^2$;

the norm of $a$: $N_a=\sqrt{\left|I_a\right|}$;

the map $V$: $C\ell_2\longmapsto\br$: $V_a=a_1^2+a_2^2-a_3^2.$
\end{defi}

For $a=a_0+a_1\be_1+a_2\be_2+a_3\be_3,b=b_0+b_1\be_1+b_2\be_2+b_3\be_3\in  C\ell_2$ with $a_i,b_i\in \br$, by the multiplication rule (\ref{rule}), we have
\begin{align*}
	ab&=a_0b_0+a_1b_1+a_2b_2-a_3b_3\\
	&+(a_0b_1+a_1b_0-a_2b_3+a_3b_2)\be_1\\
	&+(a_0b_2+a_1b_3+a_2b_0-a_3b_1)\be_2\\
	&+(a_0b_3+a_1b_2-a_2b_1+a_3b_0)\be_3.
\end{align*}

The real exponential function $e^x$   has a power series expansion $$e^x=\sum_{k=0}^{\infty}\frac{x^k}{k!}=1+x+\cdots+\frac{x^k}{k!}+\cdots,\,\forall x\in\br.$$
Extending this series to the complex domain $\bc=\{z:z=x+y\bi,x,y\in \br\}$, one get the complex exponential function
\begin{equation}\label{cexp}e^{z}=e^xe^{\bi y}=e^x(\cos y+\bi\sin y), \forall z=x+y\bi\in\bc\end{equation}
and  the Euler's formula
\begin{equation}\label{ceuler}e^{\bi\theta}=(1-\frac{\theta^2}{2!}+\frac{\theta^4}{4!}+\cdots)+\bi(\theta-\frac{\theta^3}{3!}+\frac{\theta^5}{5!}+\cdots)=\cos\theta+\bi\sin\theta.\end{equation}
The De Moivre's formula in $\bc$ reads
%$$(e^{\bi\theta})^n=e^{\bi n\theta}$$
\begin{equation}\label{cdem}(\cos\theta+\bi\sin\theta)^n=\cos n\theta+\bi\sin n\theta.\end{equation}
The nth roots of $z=re^{\bi\theta}\in \bc$ are
\begin{equation}\label{csqrt}\sqrt[n]{z}=\sqrt[n]{r}\big( \cos \frac{\theta+2k\pi}{n}+\bi\sin\frac{\theta+2k\pi}{n}\big),k=0,\cdots,n-1.\end{equation}

The above formulas in complex number field have been extended to other algebra systems.
The roots of a quaternion were given by Niven \cite{Niv} and Brand \cite{Bra}. Brand proved De Moivre's theorem and used it to
find nth roots of a quaternion.  Cho \cite{Cho} obtained Euler and De Moivre's formulas for quaternions.  \"{O}zdemir \cite{Moz} studied the nth roots and powers of a split quaternion under certain conditions. Cao and Chang \cite{cao} studied the root of a split quaternion in $Z(\bh_s)$.

In this paper, we will extended the above formulas  to  Clifford algebra $C\ell_2$.  In Section 2, we express Euler and De Moivre's formulas for Clifford algebra $C\ell_2$ and obtain some properties of  the exponential function in $C\ell_2$.
 In Section 3, we will get the nth roots in $C\ell_2$.

\section{Euler's formula  for the Clifford algebra $C\ell_2$}

In this section we will define exponential function in $C\ell_2$.  We will obtain the Euler's formula  and De Moivre's formulas for  Clifford algebra $C\ell_2$.

We define
\begin{equation}\label{vc}V=\{v=v_1 \be_1+v_2 \be_2+v_3 \be_3:v_1,v_2,v_3\in \br\}.\end{equation}
The set $V$  is the imaginary part of Clifford algebra $C\ell_2$, which  can be  identified with the Minkowski space $\br^{2,1}$ with the following Lorentzian inner product:
\begin{equation}\label{inn}\left\langle u,v\right\rangle_L=u_1v_1+u_2v_2-u_3v_3, \forall u,v\in V.\end{equation}
We define the following subsets of $V$:
\begin{equation}\label{E1im}E_1=\{\varepsilon=\epsilon_1 \be_1+\epsilon_2 \be_2+\epsilon_3 \be_3\in V:\epsilon_1^2+\epsilon_2^2-\epsilon_3^2=1\};\end{equation}
\begin{equation}\label{E0im}E_0=\{\varepsilon=\epsilon_1 \be_1+\epsilon_2 \be_2+\epsilon_3 \be_3\in V:\epsilon_1^2+\epsilon_2^2-\epsilon_3^2=0\};\end{equation}
\begin{equation}\label{E-1im}E_{-1}=\{\varepsilon=\epsilon_1 \be_1+\epsilon_2 \be_2+\epsilon_3 \be_3\in V:\epsilon_1^2+\epsilon_2^2-\epsilon_3^2=-1\}.\end{equation}
Note that $$\left\langle u,v\right\rangle_L=\Re\big(uv\big),\forall u,v\in V$$
and
\begin{equation}\label{imim}V_u=u^2=\Re\big(u^2\big)=-I_{u},\forall u\in V .\end{equation}

\begin{defi}
	We define the exponential function in  $C\ell_2$ as following: $$e^a=\sum_{k=0}^{\infty}\frac{a^k}{k!}=1+a+\cdots+\frac{a^k}{k!}+\cdots,\,\forall a\in C\ell_2.$$
\end{defi}

\begin{thm}(Euler's formulas)\label{Euler} Let $\theta\in \br$.
\begin{itemize}
	\item[(i)] 	If $\varepsilon\in E_1$  then  $$e^{\varepsilon\theta}=(1+\frac{\theta^2}{2!}+\frac{\theta^4}{4!}+\cdots)+\varepsilon(\theta+\frac{\theta^3}{3!}+\frac{\theta^5}{5!}+\cdots)=\cosh\theta+\varepsilon\sinh\theta$$

\item[(ii)]	If $\varepsilon\in E_{-1}$  then $$e^{\varepsilon\theta}=(1-\frac{\theta^2}{2!}+\frac{\theta^4}{4!}-\cdots)+\varepsilon(\theta-\frac{\theta^3}{3!}+\frac{\theta^5}{5!}-\cdots)=\cos\theta+\varepsilon\sin\theta$$

\item[(iii)] If $\varepsilon\in E_0$ then  $$e^{\varepsilon\theta}=1+\varepsilon\theta.$$
	\end{itemize}
\end{thm}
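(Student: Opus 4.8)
The plan is to reduce all three parts to the single algebraic fact, recorded in \eqref{imim}, that for $\varepsilon\in V$ the square $\varepsilon^2$ is the \emph{real scalar} $V_\varepsilon=\epsilon_1^2+\epsilon_2^2-\epsilon_3^2$. Thus $\varepsilon\in E_1\Rightarrow\varepsilon^2=1$, $\varepsilon\in E_{-1}\Rightarrow\varepsilon^2=-1$, and $\varepsilon\in E_0\Rightarrow\varepsilon^2=0$. Since $\theta\in\br$ is central in $C\ell_2$, we have $(\varepsilon\theta)^k=\theta^k\varepsilon^k$, so in each case every power of $\varepsilon\theta$ is completely explicit, and the computation becomes the classical even/odd split of the exponential series with $\bi^2=-1$ replaced by $\varepsilon^2\in\{1,0,-1\}$ — exactly mirroring the passage from $e^x$ to \eqref{ceuler}.

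First I would dispatch convergence so that the rearrangements below are legitimate. Equip $C\ell_2\cong\br^4$ with any submultiplicative norm $\|\cdot\|$ (for instance the operator norm of left multiplication, or a coordinatewise estimate using the explicit product formula displayed above). Then $\sum_{k\ge0}\|a^k\|/k!\le\sum_{k\ge0}\|a\|^k/k!=e^{\|a\|}<\infty$, so the series defining $e^a$ converges absolutely for every $a\in C\ell_2$; in particular, for $a=\varepsilon\theta$ it may be split into its even-index and odd-index subseries and each reordered freely.

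Now for part (i), with $\varepsilon\in E_1$ we get $\varepsilon^{2j}=1$ and $\varepsilon^{2j+1}=\varepsilon$ for all $j\ge0$, so regrouping the absolutely convergent series gives
\[
e^{\varepsilon\theta}=\sum_{j=0}^{\infty}\frac{\theta^{2j}}{(2j)!}+\varepsilon\sum_{j=0}^{\infty}\frac{\theta^{2j+1}}{(2j+1)!}=\cosh\theta+\varepsilon\sinh\theta.
\]
For part (ii), $\varepsilon\in E_{-1}$ gives $\varepsilon^{2j}=(-1)^j$ and $\varepsilon^{2j+1}=(-1)^j\varepsilon$, and the same regrouping produces the alternating series for $\cos\theta$ and $\sin\theta$, i.e. $e^{\varepsilon\theta}=\cos\theta+\varepsilon\sin\theta$. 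For part (iii), $\varepsilon\in E_0$ gives $\varepsilon^k=0$ for all $k\ge2$, so only the first two terms of the series survive and $e^{\varepsilon\theta}=1+\varepsilon\theta$.

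The only genuine subtlety is the convergence/rearrangement step, which is why I would treat it first; everything after that is the elementary even/odd bookkeeping of the exponential series. I would also add a remark that the trichotomy $V_\varepsilon\in\{1,0,-1\}$ after rescaling is precisely the classification of nonzero directions in $V$ by the sign of the Lorentzian form \eqref{inn} (spacelike, null, timelike), which is what singles out $E_1,E_0,E_{-1}$ as the natural analogues of the unit imaginary $\bi$.
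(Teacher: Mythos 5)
Your proposal is correct and follows essentially the same route as the paper: use $\varepsilon^2=V_\varepsilon\in\{1,-1,0\}$ to make every power of $\varepsilon$ explicit, then split the exponential series into even and odd parts. The only difference is that you explicitly justify absolute convergence and rearrangement via a submultiplicative norm, a point the paper's proof takes for granted.
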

\begin{proof}
	If $\varepsilon\in E_{1}$ then   $$\varepsilon^{2k}=1,\ \ \varepsilon^{2k+1}=\varepsilon.$$ Therefore
	$$e^{\varepsilon\theta}=\sum_{k=0}^{\infty}\frac{(\varepsilon\theta)^k}{k!}=(1+\frac{\theta^2}{2!}+\frac{\theta^4}{4!}+\cdots)+\varepsilon(\theta+\frac{\theta^3}{3!}+\frac{\theta^5}{5!}+\cdots)=\cosh\theta+\varepsilon\sinh\theta.$$ This proves (i).

		If $\varepsilon\in E_{-1}$  then $$\varepsilon^{2k}=(-1)^k,\ \ \varepsilon^{2k+1}=(-1)^k\varepsilon.$$
	 Therefore
	$$e^{\varepsilon\theta}=\sum_{k=0}^{\infty}\frac{(\varepsilon\theta)^k}{k!}=(1-\frac{\theta^2}{2!}+\frac{\theta^4}{4!}-\cdots)+\varepsilon(\theta-\frac{\theta^3}{3!}+\frac{\theta^5}{5!}-\cdots)=\cos\theta+\varepsilon\sin\theta$$ This proves (ii).	
		
		 If $\varepsilon\in E_0$ then  $$\varepsilon^{k+2}=0.$$
	Therefore
	$$e^{\varepsilon\theta}=\sum_{k=0}^{\infty}\frac{(\varepsilon\theta)^k}{k!}=1+\varepsilon\theta.$$ This proves (iii).	
\end{proof}

If $I_a<0$ then $V_a>0$ since $I_a=a_0^2-V_a$. So we can divide  $C\ell_2$ into the following five subsets:
$$S_1=\{a\in C\ell_2: I_a>0\mbox{ and } V_a>0\},$$
$$S_2=\{a\in C\ell_2: I_a>0\mbox{ and } V_a<0\},$$
$$S_3=\{a\in C\ell_2: I_a<0\mbox{ and } V_a>0\},$$
$$S_4=\{a\in C\ell_2:I_a=0\},$$
$$S_5=\{a\in C\ell_2:V_a=0\}.$$
Note that \begin{equation}C\ell_2=\cup_{i=1}^{5} S_i,\,\, (\cup_{i=1}^{3} S_i)\cap (\cup_{i=4}^{5} S_i)=\emptyset,\,\, S_4\cap S_5=E_0.\end{equation}
If $a\in C\ell_2-S_4$ then  $I_a\neq 0$ and  \begin{equation}a^{-1}=\frac{\overline{a}}{I_a}
\end{equation}
 is the inverse of $a$.  By  the above formula,  we have the following proposition.

\begin{pro}
	\label{inversetype}
	\begin{itemize}
		\item[(i)]   If $a=N_a(\cosh\theta+\varepsilon\sinh\theta)\in S_1$ then
		$$a^{-1}=N_a^{-1}(\cosh(-\theta)+\varepsilon\sinh(-\theta))\in S_1.$$

		\item[(ii)] If $a=N_a(\cos\theta+\varepsilon\sin\theta)\in S_2$ then
			$$a^{-1}=N_a^{-1}(\cos(-\theta)+\varepsilon\sin(-\theta))\in S_2.$$

		\item[(iii)]  If $a=N_a(\sinh\theta+\varepsilon\cosh\theta)\in S_3$
		then
			$$a^{-1}=N_a^{-1}(\sinh(-\theta)+\varepsilon\cosh(-\theta))\in S_3.$$

		\item[(iv)]  If $a=a_0+a_1\be_1+a_2\be_2+a_3\be_3\in S_5-S_4$ then $a^{-1}=a_0^{-1}-a_0^{-2}\Im(a).$
	\end{itemize}
	\end{pro}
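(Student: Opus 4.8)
The plan is to apply the explicit inverse formula $a^{-1}=\overline a/I_a$, valid on all of $C\ell_2-S_4$, and simply read off the answer in each of the four cases. Only three ingredients are needed: conjugation negates the imaginary part, so $\overline a=a_0-\Im(a)$; the scalar $I_a$ is recovered from $I_a=a_0^2-V_a$; and the elementary parities $\cosh(-\theta)=\cosh\theta$, $\sinh(-\theta)=-\sinh\theta$, $\cos(-\theta)=\cos\theta$, $\sin(-\theta)=-\sin\theta$. I would also recall at the outset that in these polar representations the unit imaginary $\varepsilon$ lies in $E_1$ for (i) and (iii) (where hyperbolic functions occur) and in $E_{-1}$ for (ii), so that $V_\varepsilon=1$, resp.\ $V_\varepsilon=-1$; this is consistent with, indeed forced by, the identity $N_a^2=|I_a|$.

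For (i), from $a=N_a(\cosh\theta+\varepsilon\sinh\theta)$ with $\varepsilon\in E_1$ one computes $a_0=N_a\cosh\theta$, $\Im(a)=N_a\sinh\theta\,\varepsilon$, hence $V_a=N_a^2\sinh^2\theta$ and $I_a=N_a^2(\cosh^2\theta-\sinh^2\theta)=N_a^2>0$, consistent with $a\in S_1$. Then $\overline a=N_a(\cosh\theta-\varepsilon\sinh\theta)=N_a\bigl(\cosh(-\theta)+\varepsilon\sinh(-\theta)\bigr)$, and dividing by $I_a=N_a^2$ gives the stated formula; since $I_{a^{-1}}=N_a^{-2}>0$ and $V_{a^{-1}}=N_a^{-2}\sinh^2(-\theta)>0$, we get $a^{-1}\in S_1$. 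Case (ii) is identical with $\varepsilon\in E_{-1}$: here $V_a=-N_a^2\sin^2\theta$, $I_a=N_a^2(\cos^2\theta+\sin^2\theta)=N_a^2$, $\overline a=N_a\bigl(\cos(-\theta)+\varepsilon\sin(-\theta)\bigr)$, so $a^{-1}=\overline a/N_a^2$ lies in $S_2$ by the same sign check. Case (iv) is the quickest: $V_a=0$ forces $I_a=a_0^2$, and $a\notin S_4$ gives $a_0\ne0$, whence $a^{-1}=\overline a/a_0^2=a_0^{-1}-a_0^{-2}\Im(a)$.

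The one place needing a little care, which I would flag as the main subtlety, is (iii). From $a=N_a(\sinh\theta+\varepsilon\cosh\theta)$ with $\varepsilon\in E_1$ one gets $V_a=N_a^2\cosh^2\theta>0$ but $I_a=N_a^2(\sinh^2\theta-\cosh^2\theta)=-N_a^2<0$, so $a\in S_3$. Now $\overline a=N_a(\sinh\theta-\varepsilon\cosh\theta)$, and because we are dividing by the \emph{negative} scalar $I_a=-N_a^2$ the whole expression picks up a minus sign: $a^{-1}=-N_a^{-1}(\sinh\theta-\varepsilon\cosh\theta)=N_a^{-1}(-\sinh\theta+\varepsilon\cosh\theta)=N_a^{-1}\bigl(\sinh(-\theta)+\varepsilon\cosh(-\theta)\bigr)$, the last step using the oddness of $\sinh$ and the evenness of $\cosh$. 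The point to watch is precisely that this re-expression in terms of $-\theta$ succeeds only because the sign flip from the negative denominator conspires exactly with those parities; it would fail for an $S_1$- or $S_2$-type form. Finally $I_{a^{-1}}=-N_a^{-2}<0$ and $V_{a^{-1}}=N_a^{-2}\cosh^2(-\theta)>0$ give $a^{-1}\in S_3$, which completes the argument. Beyond this bookkeeping I do not anticipate any real difficulty.
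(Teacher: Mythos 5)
Your proof is correct and follows exactly the route the paper intends: it states Proposition \ref{inversetype} as an immediate consequence of the formula $a^{-1}=\overline{a}/I_a$ and gives no further details, all of which you supply accurately. Your explicit treatment of the sign flip in case (iii), where dividing by the negative scalar $I_a=-N_a^2$ combines with the parities of $\sinh$ and $\cosh$ to produce the $-\theta$ form, is the one genuinely nontrivial point and you handle it correctly.
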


Let $\bz$ be the set of integers and $\bn$ be the set of natural numbers.
\begin{thm} (De Moivre's formulas)\label{dmoiv}
	\begin{itemize}
		\item[(i)]   $a=a_0+a_1\be_1+a_2\be_2+a_3\be_3\in S_1$ can be written in the form
		\begin{equation}\label{ch1}
			a=a_0+\Im(a)=N_a\big(\frac{a_0}{N_a}+\frac{\Im(a)}{\sqrt{V_a}}\frac{\sqrt{V_a}}{N_a}\big)=N_a(\cosh\theta+\varepsilon\sinh\theta),
		\end{equation}
		where \begin{equation}\label{ch2}\cosh\theta=\frac{a_0}{N_a},\, \sinh\theta=\frac{\sqrt{V_a}}{N_a},\, \varepsilon=\frac{\Im(a)}{\sqrt{V_a}}\in E_1\end{equation}
		and
		\begin{equation}\label{1}
		a^n=N_a^n(\cosh n\theta+\varepsilon\sinh n\theta), \forall\, n\in \bz.
	\end{equation}

		\item[(ii)] $a=a_0+a_1\be_1+a_2\be_2+a_3\be_3\in S_2$ can be written in the form
		\begin{equation}\label{cos1}
			a=a_0+\Im(a)=N_a\big(\frac{a_0}{N_a}+\frac{\Im(a)}{\sqrt{-V_a}}\frac{\sqrt{-V_a}}{N_a}\big)=N_a(\cos\theta+\varepsilon\sin\theta),
		\end{equation}
		where \begin{equation}\label{cos2}\cos\theta=\frac{a_0}{N_a},\, \sin\theta=\frac{\sqrt{-V_a}}{N_a},\, \varepsilon=\frac{\Im(a)}{\sqrt{-V_a}}\in E_{-1}\end{equation}
			and
		\begin{equation}\label{r1}
			a^n=N_a^n(\cos n\theta+\varepsilon\sin n\theta),\forall\, n\in \bz.
		\end{equation}

		\item[(iii)]  $a=a_0+a_1\be_1+a_2\be_2+a_3\be_3\in S_3$ can be written in the form
	\begin{equation}\label{ch3}
		a=a_0+\Im(a)=N_a\big(\frac{a_0}{N_a}+\frac{\Im(a)}{\sqrt{V_a}}\frac{\sqrt{V_a}}{N_a}\big)=N_a(\sinh\theta+\varepsilon\cosh\theta),
	\end{equation}
	where \begin{equation}\label{ch4}\sinh\theta=\frac{a_0}{N_a},\, \cosh\theta=\frac{\sqrt{V_a}}{N_a},\, \varepsilon=\frac{\Im(a)}{\sqrt{V_a}}\in E_1,\end{equation}
	and

	$$a^n=\begin{cases} (N_a)^{2k+1}\big(\sinh{(2k+1)\theta}+\varepsilon\cosh{(2k+1)\theta}\big),& n=2k+1,\forall\, k\in \bz; \\(N_a)^{2k}\big(\cosh{2k\theta}+\varepsilon\sinh{2k\theta}\big), & n=2k,\forall\, k\in \bz. \end{cases}$$

\item[(iv)]  Let $a=a_0+a_1\be_1+a_2\be_2+a_3\be_3\in S_4$. Then \begin{equation}a^n=(2\Re(a))^{n-1}a, \forall\, n\in \bn. \end{equation}

\item[(v)]  If $a=a_0+a_1\be_1+a_2\be_2+a_3\be_3\in S_5\cap S_4=E_0$ then \begin{equation}a^n=\Im(a)^n=0, \forall\, n\geq 2; \end{equation}
and if $a=a_0+a_1\be_1+a_2\be_2+a_3\be_3\in S_5-S_4$ then
\begin{equation}a^n=(\Re(a))^{n}+n(\Re(a))^{n-1}\Im(a),\, \forall\, n\in \bz. \end{equation}
	\end{itemize}
\end{thm}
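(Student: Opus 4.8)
My plan rests on one unifying observation: every $a\in C\ell_2$ decomposes as $a=\Re(a)+\Im(a)$ with $\Re(a)=a_0$ a central real scalar and, by (\ref{imim}), $\Im(a)^2=V_{\Im(a)}=V_a\in\br$; hence $a$ always lies in the commutative two-dimensional real subalgebra $\br\oplus\br\,\Im(a)$, inside which powers are computed by a binomial expansion or by an addition formula. I would first verify the normal forms (\ref{ch1})--(\ref{ch2}), (\ref{cos1})--(\ref{cos2}), (\ref{ch3})--(\ref{ch4}). In case (i), $V_a>0$ makes $\varepsilon:=\Im(a)/\sqrt{V_a}$ satisfy $\varepsilon^2=V_a/V_a=1$, so $\varepsilon\in E_1$, while $I_a>0$ gives $(a_0/N_a)^2-(\sqrt{V_a}/N_a)^2=I_a/N_a^2=1$, so some $\theta\in\br$ has $\cosh\theta=a_0/N_a$ and $\sinh\theta=\sqrt{V_a}/N_a$. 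Case (ii) is the same computation with $-V_a>0$, $\varepsilon^2=-1$, $\varepsilon\in E_{-1}$ and $\cos^2+\sin^2=1$; case (iii) is the same with $V_a>0$, $\varepsilon\in E_1$, $|I_a|=V_a-a_0^2$ and $\cosh^2-\sinh^2=1$.

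Next I would establish the power identities for $n\ge 0$ by induction on $n$. In (i), multiplying $a=N_a(\cosh\theta+\varepsilon\sinh\theta)$ by $a^k=N_a^k(\cosh k\theta+\varepsilon\sinh k\theta)$ and using that $\varepsilon$ commutes with reals and $\varepsilon^2=1$, the product collapses through the hyperbolic addition formulas to $N_a^{k+1}(\cosh(k+1)\theta+\varepsilon\sinh(k+1)\theta)$, which is (\ref{1}); in (ii) the identical computation with $\varepsilon^2=-1$ and the circular addition formulas gives (\ref{r1}). For (iii) I would first note $a^2=N_a^2(\cosh 2\theta+\varepsilon\sinh 2\theta)$, using $\varepsilon^2=1$, $\cosh^2\theta+\sinh^2\theta=\cosh 2\theta$ and $2\sinh\theta\cosh\theta=\sinh 2\theta$, so that $a^2$ is of the case-(i) type with angle $2\theta$; a two-step induction — even powers by the case-(i) multiplication, odd powers by one further multiplication by $a$ — then produces the stated parity split. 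Equivalently, since $\varepsilon^{-1}=\varepsilon$ one may write $a=N_a\varepsilon(\cosh\theta+\varepsilon\sinh\theta)$ and combine $(\cosh\theta+\varepsilon\sinh\theta)^n=\cosh n\theta+\varepsilon\sinh n\theta$ with $\varepsilon^n\in\{1,\varepsilon\}$.

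To reach all $n\in\bz$ in (i)--(iii), Proposition~\ref{inversetype} already presents $a^{-1}$ in the same normal form with $\theta$ replaced by $-\theta$, so applying the previous step to $a^{-1}$ and $-n$, together with the parity of $\cosh,\cos,\sinh,\sin$, settles the negative exponents; alternatively, through Theorem~\ref{Euler} one writes $a=N_a e^{\varepsilon\theta}$ (or $a=N_a\varepsilon e^{\varepsilon\theta}$ in (iii)), and since $\varepsilon\theta_1$ and $\varepsilon\theta_2$ commute the Cauchy product of the absolutely convergent exponential series gives $e^{\varepsilon\theta_1}e^{\varepsilon\theta_2}=e^{\varepsilon(\theta_1+\theta_2)}$, hence $(e^{\varepsilon\theta})^n=e^{n\varepsilon\theta}$ for every $n\in\bz$. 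Cases (iv) and (v) are then immediate: if $a\in S_4$ then $I_a=0$ forces $\Im(a)^2=V_a=a_0^2=\Re(a)^2$, whence $a^2=a_0^2+2a_0\Im(a)+\Im(a)^2=2\Re(a)\,a$ and induction yields $a^n=(2\Re(a))^{n-1}a$; if $a\in E_0$ then $\Re(a)=0$ and $\Im(a)^2=V_a=0$, so $a^n=\Im(a)^n=0$ for $n\ge 2$; and if $a\in S_5-S_4$ then $\Im(a)^2=V_a=0$ with $a_0\ne 0$ (since $I_a=a_0^2\ne 0$), so the binomial theorem in $\br\oplus\br\,\Im(a)$ truncates after two terms, $a^n=\Re(a)^n+n\Re(a)^{n-1}\Im(a)$, the case $n<0$ being consistent with Proposition~\ref{inversetype}(iv), which is precisely the $n=-1$ instance.

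I do not anticipate an essential obstacle. The points that need care are the existence and sign conventions for $\theta$ in the normal forms — in particular $\cosh\theta=a_0/N_a\ge 1$ is solvable only when $a_0>0$, so in case (i) one must restrict to $a_0>0$ (absorbing a sign into $N_a$ otherwise) or permit a global sign in (\ref{ch1}) — and a single clean justification that the exponential series in the one commuting element $\varepsilon$ may be multiplied termwise, since that identity, $(e^{\varepsilon\theta})^n=e^{n\varepsilon\theta}$, is what links Theorems~\ref{Euler} and~\ref{dmoiv}.
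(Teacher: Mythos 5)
Your proposal is correct and follows essentially the same route as the paper's proof: the same normal forms, induction via the hyperbolic/circular addition formulas for nonnegative powers, the inverse formula of Proposition~\ref{inversetype} for negative exponents, the reduction of case (iii) to case (i) through $a^2$, and the truncated binomial expansion for cases (iv) and (v). Your remark that the normal form in case (i) requires $a_0>0$ (since $\cosh\theta\ge 1$) is a genuine point of care that the paper's proof silently passes over.
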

\begin{proof}

For $a\in S_1$, we can write $a$ as $$a=N_a(\cosh\theta+\varepsilon\sinh\theta),$$ where $\theta$ and $\varepsilon$ are given by (\ref{ch2}).
Assume that $a^n=N_a^n(\cosh n\theta+\varepsilon\sinh n\theta)$ holds for $n\in \bn$. Then
\begin{align*}a^{n+1}&=N_a^n(\cosh n\theta+\varepsilon\sinh n\theta)N_a(\cosh\theta+\varepsilon\sinh\theta)\\
&=N_a^{n+1}\big(\cosh n\theta\cosh \theta+\varepsilon\sinh n\theta\sinh \theta\big)\\
&=N_a^{n+1}(\cosh (n+1)\theta+\varepsilon\sinh (n+1)\theta).
\end{align*}
We conclude the proof of (i) by  induction for positive integers.  Note that $$a^{-1}=N_a^{-1}\big(\cosh(-\theta)+\varepsilon\sinh (-\theta)\big).$$
So we have
$$a^{-n}=N_a^{-n}\big(\cosh(-n\theta)+\varepsilon\sinh (-n\theta)\big).$$
This proves (i). Similarly, we can prove (ii).

For $a\in S_3$, $a$ can be written in the form
$$a=N_a(\sinh\theta+\varepsilon\cosh\theta),$$
where $\theta$ and $\varepsilon$ are given by (\ref{ch4}). Note that
$a^2=N_a^2(\cosh 2\theta+\varepsilon\sinh 2\theta)$. By the properties of functions $\cosh x$ and  $\sinh x$, we can prove (iii).

Noting  that $a=2\Re(a)-\overline{a}$,   we have $$a^2=a(2\Re(a)-\overline{a})=2\Re(a)a-I_a.$$  If $I_a=0$ then $a^2=2\Re(a)a$. By induction, we have  $a^n=(2\Re(a))^{n-1}a$. This proves (iv).

Note that $a=a_0+\Im(a)$ and $a_0 \Im(a)=\Im(a) a_0$. If $V_a=0$ then $$\Im(a)^k=0,\forall k\geq2.$$
Therefore for $n\geq 0$ we have
$$a^n=(a_0+\Im(a))^n=\sum_{k=0}^nC_n^k a_0^{n-k}\Im(a)^k=a_0^{n}+n a_0^{n-1}\Im(a).$$
Furthermore if $a=a_0+a_1\be_1+a_2\be_2+a_3\be_3\in S_5-S_4$ then  $$a^{-1}=\frac{\overline{a}}{I_a}=a_0^{-1}-a_0^{-2}\Im(a)\in S_5$$ and

$$a^{-n}=a_0^{-n}+na_0^{-(n-1)}\big(-a_0^{-2}\Im(a)\big)=a_0^{-n}+(-n)a_0^{-n-1}\Im(a).$$
This proves (v).
\end{proof}

Note that
$$e^a=\sum_{k=0}^{\infty}\frac{(a_0+\Im(a))^k}{k!}=\sum_{m=0}^{\infty}\frac{a_0^m}{m!}\sum_{n=0}^{\infty}\frac{\Im(a)^n}{n!}=e^{a_0}e^{\Im(a)}.$$  By Theorem \ref{Euler}, we have the following theorem.

\begin{thm}(Exponential function)\label{exp}
		 	$$e^a=\begin{cases} e^{a_0}(\cosh{\sqrt{V_a}}+\frac{\Im(a)}{\sqrt{V_a}}\sinh{\sqrt{V_a}}),& V_a>0; \\e^{a_0}(\cos{\sqrt{-V_a}}+\frac{\Im(a)}{\sqrt{-V_a}}\sin{\sqrt{-V_a}}),& V_a<0;\\
	 	e^{a_0}(1+\Im(a)),& V_a=0.
 	 \end{cases}$$
\end{thm}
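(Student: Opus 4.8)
The plan is to reduce the statement to the Euler formulas of Theorem \ref{Euler} by means of the factorization $e^a = e^{a_0}e^{\Im(a)}$ recorded in the display immediately preceding the statement. That identity is legitimate because $a_0$ is a real scalar, hence commutes with $\Im(a)$, and because the defining exponential series converges absolutely in each of the four real coordinates: the bilinear multiplication formula for $C\ell_2$ gives a bound $\|a^k\|\le (c\|a\|)^k$ for a fixed constant $c$, so the two series may be multiplied by the Cauchy product and rearranged. Thus it suffices to evaluate $e^{\Im(a)}$ for $\Im(a) = a_1\be_1 + a_2\be_2 + a_3\be_3 \in V$, and then multiply the result by $e^{a_0}$.

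Next I would split according to the sign of $V_a = a_1^2 + a_2^2 - a_3^2$, which by (\ref{imim}) is exactly $\Im(a)^2$. If $V_a > 0$, put $\theta = \sqrt{V_a}$ and $\varepsilon = \Im(a)/\theta$; since the Lorentzian quadratic form of $\varepsilon$ equals $V_a/V_a = 1$ we have $\varepsilon \in E_1$ and $\Im(a) = \varepsilon\theta$, so Theorem \ref{Euler}(i) gives $e^{\Im(a)} = \cosh\sqrt{V_a} + \frac{\Im(a)}{\sqrt{V_a}}\sinh\sqrt{V_a}$. If $V_a < 0$, put $\theta = \sqrt{-V_a}$ and $\varepsilon = \Im(a)/\theta$; now the quadratic form of $\varepsilon$ is $V_a/(-V_a) = -1$, so $\varepsilon \in E_{-1}$ and Theorem \ref{Euler}(ii) gives $e^{\Im(a)} = \cos\sqrt{-V_a} + \frac{\Im(a)}{\sqrt{-V_a}}\sin\sqrt{-V_a}$. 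If $V_a = 0$, then $\Im(a) \in E_0$, and applying Theorem \ref{Euler}(iii) with $\varepsilon = \Im(a)$ and $\theta = 1$ gives $e^{\Im(a)} = 1 + \Im(a)$ (the subcase $\Im(a)=0$ being trivial and consistent with the same formula). Multiplying the three results by $e^{a_0}$ yields exactly the three branches claimed.

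I do not expect a serious obstacle; the only item that deserves explicit attention is the term-by-term rearrangement underlying $e^a = e^{a_0}e^{\Im(a)}$, i.e. the absolute convergence of the exponential series in $C\ell_2$ and the legitimacy of the Cauchy product for the commuting pair $(a_0,\Im(a))$. Everything after that is a direct substitution into Theorem \ref{Euler}, together with the observation that the scalar factor $\frac{\Im(a)}{\sqrt{\pm V_a}}$ appearing in the answer is precisely the unit imaginary direction $\varepsilon$, so that each branch has the same shape as the classical formula (\ref{ceuler}).
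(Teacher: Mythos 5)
Your proposal is correct and follows essentially the same route as the paper: the identity $e^a=e^{a_0}e^{\Im(a)}$ (valid since $a_0$ is a commuting real scalar) followed by a case split on the sign of $V_a=\Im(a)^2$ and substitution into the three Euler formulas of Theorem \ref{Euler}. Your added remarks on absolute convergence and the Cauchy product make explicit a rearrangement step the paper takes for granted, but the argument is otherwise identical.
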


\section{The root of an element in  $C\ell_2$} \label{rootse}
In this section we want to find roots of a Clifford algebra $C\ell_2$ by applying  De Moivre's formulas.

 We can verify the following proposition.
\begin{pro}\label{pro1.1}For $n\in \bn$ and $t\in \br$, we have the following properties.
\begin{itemize}
	\item[(i)] 	$$I_{ab}=I_aI_b,\,I_{-a}=I_{\bar{a}}, I_{ta}=t^2I_a, \, I_{a^n}=I_a^n;$$
	\item[(ii)] 	$$V_{-a}=V_{\bar{a}},\,\, V_{ta}=t^2V_a.$$
	\end{itemize}
\end{pro}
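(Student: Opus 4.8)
The plan is to reduce the whole statement to two elementary facts about $C\ell_2$: first, that Clifford conjugation is an anti-automorphism, i.e. $\overline{ab}=\overline{b}\,\overline{a}$ for all $a,b\in C\ell_2$; and second, that $I_a$ is a \emph{real} scalar for every $a$, hence central. The first fact I would establish by checking it on the basis $\be_0,\be_1,\be_2,\be_3$ using the multiplication rules (\ref{rule}) and extending bilinearly: for instance $\overline{\be_1\be_2}=\overline{\be_3}=-\be_3$ while $\overline{\be_2}\,\overline{\be_1}=(-\be_2)(-\be_1)=\be_2\be_1=-\be_3$, and similarly $\overline{\be_2\be_3}=\overline{-\be_1}=\be_1=\be_3\be_2=\overline{\be_3}\,\overline{\be_2}$, and so on for the remaining pairs (the diagonal cases $\overline{\be_t\be_t}$ being immediate). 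Alternatively one could verify $\overline{ab}=\overline{b}\,\overline{a}$ directly from the explicit product formula for $ab$ stated in the introduction, but the basis check is shorter.

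Granting these, part (i) is formal. For $I_{ab}=I_aI_b$, write $I_{ab}=\overline{ab}\,(ab)=\overline{b}\,\overline{a}\,a\,b=\overline{b}\,I_a\,b=I_a\,\overline{b}\,b=I_aI_b$, pulling the real number $I_a$ out by centrality. For $I_{ta}=t^2I_a$ with $t\in\br$, compute $I_{ta}=\overline{ta}\,(ta)=(t\overline{a})(ta)=t^2\overline{a}a=t^2I_a$. For $I_{-a}=I_{\overline{a}}$, observe both sides equal $I_a$, since $\overline{(-a)}(-a)=\overline{a}a$ and $\overline{\overline{a}}\,\overline{a}=a\overline{a}$ using $\overline{\overline{a}}=a$. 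Finally $I_{a^n}=I_a^n$ follows by induction on $n$ from $I_{ab}=I_aI_b$ with $b=a^{n-1}$, the base case $n=1$ (and $n=0$, since $I_1=1$) being trivial. Part (ii) is a direct coordinate computation from $V_a=a_1^2+a_2^2-a_3^2$, which depends only on the imaginary coefficients of $a$: the elements $-a$ and $\overline{a}$ both have imaginary coefficients $-a_1,-a_2,-a_3$, so $V_{-a}=V_{\overline{a}}=a_1^2+a_2^2-a_3^2=V_a$, and $ta$ has imaginary coefficients $ta_1,ta_2,ta_3$, so $V_{ta}=t^2(a_1^2+a_2^2-a_3^2)=t^2V_a$.

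I expect the only step needing any care to be the anti-automorphism property $\overline{ab}=\overline{b}\,\overline{a}$; once that and the centrality of $I_a$ are available, everything else in Proposition \ref{pro1.1} is bookkeeping. The subtlety worth double-checking is that the ``conjugate'' used here negates the coefficients of all of $\be_1,\be_2,\be_3$ including the bivector $\be_3=\be_1\be_2$, so one must confirm that the signs in $\overline{\be_2\be_3}$ and $\overline{\be_3\be_1}$ come out consistently with $\be_2\be_3=-\be_1$ and $\be_3\be_1=-\be_2$; the checks above show they do.
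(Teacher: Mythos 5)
Your proof is correct. Note that the paper itself supplies no argument here --- it only says ``We can verify the following proposition'' --- so there is no official proof to compare against; your write-up actually fills that gap. Your route is the right one: everything in (i) reduces to the two facts you isolate, namely that the conjugation $a\mapsto\overline{a}$ (which negates the coefficients of $\be_1,\be_2$ \emph{and} of the bivector $\be_3$) is an anti-automorphism, $\overline{ab}=\overline{b}\,\overline{a}$, and that $I_a=\overline{a}a$ is a real scalar and hence central, so that $I_{ab}=\overline{b}\,\overline{a}\,a\,b=I_a\overline{b}b=I_aI_b$. Your basis checks of the anti-automorphism property are the genuinely nontrivial content (the signs do work out precisely because conjugation flips the grade-2 part as well as the grade-1 part, as you observe), and the only alternative the paper's setup offers --- expanding $I_{ab}$ coordinatewise from the explicit product formula and comparing with $I_aI_b$ --- is a much longer computation with no additional insight. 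The remaining items ($I_{ta}=t^2I_a$, $I_{-a}=I_{\overline{a}}=I_a$, the induction giving $I_{a^n}=I_a^n$ for $n\in\bn$, and part (ii) from the fact that $V_a$ depends only on the squares of the imaginary coefficients) are handled correctly and are indeed just bookkeeping.
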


\begin{pro}
\label{Vcase}For $n\in \bn$, we have the following properties.
	\begin{itemize}
		\item[(i)]   If $a=N_a(\cosh\theta+\varepsilon\sinh\theta)\in S_1$ then
	$V_{a^n}=I_a^n(\sinh n\theta)^2.$

		\item[(ii)] If $a=N_a(\cos\theta+\varepsilon\sin\theta)\in S_2$ then
		$V_{a^n}=-I_a^n(\sin n\theta)^2.$

		\item[(iii)]  If $a=N_a(\sinh\theta+\varepsilon\cosh\theta)\in S_3$
	then
		
		$$V(a^n)=\begin{cases} (I_a)^{2k+1}(\cosh{(2k+1)\theta})^2,& n=2k+1 \\(I_a)^{2k}(\sinh{2k\theta})^2, & n=2k. \end{cases}$$

		\item[(iv)]  If $a=a_0+a_1\be_1+a_2\be_2+a_3\be_3\in S_4$ then $V_{a^n}=(2a_0)^{2n-2}V_a.$
		
		\item[(v)]  If $a=a_0+a_1\be_1+a_2\be_2+a_3\be_3\in S_5$ then $V_{a^n}=0.$
		\end{itemize}

\end{pro}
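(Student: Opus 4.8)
The statement is a direct corollary of the De Moivre formulas (Theorem~\ref{dmoiv}) together with the multiplicativity and scaling identities of Proposition~\ref{pro1.1}. The plan is to treat the five cases separately, in each case reading off $\Im(a^n)$ (or $\Re(a^n)$) from the relevant part of Theorem~\ref{dmoiv} and then feeding it into $V_{ta}=t^2V_a$.

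First I would record two elementary facts. (a) For every $x=x_0+\Im(x)\in C\ell_2$ one has $V_x=x_0^2-I_x$, since $I_x=x_0^2-V_x$ by definition; combined with $I_{a^n}=I_a^n$ from Proposition~\ref{pro1.1}(i) this already gives $V_{a^n}=(\Re a^n)^2-I_a^n$, so it suffices to know $\Re(a^n)$ and $I_a$. (b) If $x=\alpha+\beta\varepsilon$ with $\alpha,\beta\in\br$ and $\varepsilon\in V$, then $\Im(x)=\beta\varepsilon$, hence $V_x=\beta^2V_\varepsilon$ by Proposition~\ref{pro1.1}(ii); in particular $V_\varepsilon=1$ for $\varepsilon\in E_1$, $V_\varepsilon=-1$ for $\varepsilon\in E_{-1}$, and $V_\varepsilon=0$ for $\varepsilon\in E_0$. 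I would also note $N_a^2=|I_a|$, so that $N_a^2=I_a$ on $S_1\cup S_2$ while $N_a^2=-I_a$ on $S_3$.

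Now the cases. For (i), Theorem~\ref{dmoiv}(i) gives $a^n=N_a^n(\cosh n\theta+\varepsilon\sinh n\theta)$ with $\varepsilon\in E_1$, so $\Im(a^n)=N_a^n\sinh(n\theta)\,\varepsilon$ and, by fact (b), $V_{a^n}=(N_a^n\sinh n\theta)^2V_\varepsilon=N_a^{2n}\sinh^2 n\theta=I_a^n\sinh^2 n\theta$ (one may instead use fact (a) and $\cosh^2-\sinh^2=1$). Case (ii) is the same computation with $\cosh\to\cos$ and $\varepsilon\in E_{-1}$, the factor $V_\varepsilon=-1$ producing the minus sign. For (iii), Theorem~\ref{dmoiv}(iii) writes $a^n=N_a^{2k}(\cosh 2k\theta+\varepsilon\sinh 2k\theta)$ when $n=2k$ and $a^n=N_a^{2k+1}(\sinh(2k+1)\theta+\varepsilon\cosh(2k+1)\theta)$ when $n=2k+1$, with $\varepsilon\in E_1$; squaring the coefficient of $\varepsilon$ and substituting $N_a^{2n}=(N_a^2)^n$ with $N_a^2=-I_a$ yields the two branches. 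Finally (iv) and (v) are read straight off Theorem~\ref{dmoiv}(iv)--(v): on $S_4$ one has $a^n=(2a_0)^{n-1}a$, so $V_{a^n}=\big((2a_0)^{n-1}\big)^2V_a=(2a_0)^{2n-2}V_a$ by Proposition~\ref{pro1.1}(ii); on $S_5$ the imaginary part of $a^n$ is a real scalar times $\Im(a)$ (namely $na_0^{n-1}\Im(a)$, or $0$ on $E_0$ for $n\ge2$), so $V_{a^n}$ is that scalar squared times $V_a=0$, hence $V_{a^n}=0$.

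I do not expect a genuine obstacle: the whole argument is bookkeeping on top of Theorem~\ref{dmoiv}. The one place that demands care is case (iii), where $I_a<0$ forces $N_a^2=-I_a$, so the sign of $N_a^{2n}=(-I_a)^n$ in the odd branch $n=2k+1$ must be handled consistently; cross-checking that branch through $V_{a^n}=(\Re a^n)^2-I_a^n$ together with $\cosh^2-\sinh^2=1$ is the safest way to pin it down. One should also keep in mind the overlap $S_4\cap S_5=E_0$ and verify that the formulas in (iv) and (v) are compatible there.
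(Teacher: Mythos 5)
The paper itself states this proposition with no proof (it is offered as a consequence of Theorem~\ref{dmoiv} and Proposition~\ref{pro1.1}), and your reduction to those two results is exactly the intended route; your arguments for cases (i), (ii), (iv) and (v) are correct and complete. The one substantive problem is that you stop short in case (iii): if you actually carry out the sign check you flag as delicate, it does \emph{not} ``yield the two branches'' as printed. For $a\in S_3$ one has $I_a<0$, hence $N_a^2=|I_a|=-I_a$, and for $n=2k+1$ your own computation (the coefficient of $\varepsilon\in E_1$ squared, times $V_\varepsilon=1$) gives
$$V_{a^{2k+1}}=\big(N_a^{2k+1}\cosh((2k+1)\theta)\big)^2=(-I_a)^{2k+1}\cosh^2((2k+1)\theta)=-(I_a)^{2k+1}\cosh^2((2k+1)\theta),$$
which is positive, as it must be because odd powers of $a$ stay in $S_3$, where $V>0$. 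The printed formula $(I_a)^{2k+1}(\cosh(2k+1)\theta)^2$ is negative, so it is off by a sign. A concrete check: $a=\be_1\in S_3$ has $N_a=1$, $\theta=0$, $\varepsilon=\be_1$, $I_a=-1$, and $V_{a^1}=1$, while the printed branch gives $-1$. Your suggested cross-check via $V_{a^n}=(\Re a^n)^2-I_{a^n}$ and $\cosh^2-\sinh^2=1$ confirms the same corrected value. So the method is sound, but as a proof of the statement it is incomplete: you must either record the corrected odd branch $-(I_a)^{2k+1}\cosh^2((2k+1)\theta)$ (equivalently $N_a^{2(2k+1)}\cosh^2((2k+1)\theta)$) or note explicitly that the proposition is misstated there. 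The even branch $(I_a)^{2k}(\sinh 2k\theta)^2=(-I_a)^{2k}(\sinh 2k\theta)^2$ is unaffected, and so is the downstream Proposition~\ref{type}, since membership of $a^{2k+1}$ in $S_3$ already follows from $I_{a^{2k+1}}=I_a^{2k+1}<0$.
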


By Theorem \ref{dmoiv} and Proposition \ref{Vcase}, we have the following proposition.
\begin{pro}\label{type}For $n,k\in \bn$, we have the following properties.
	\begin{itemize}
		\item [(i)]
 If $a\in S_i$ then  $a^n\in S_i$ for $i=1,4,5$;
  \item [(ii)]
  If $a=N_a(\cos\theta+\varepsilon\sin\theta)\in S_2$ then  $a^n\in S_2$ provided $\sin n\theta\neq 0$, and  $a^n\in S_5$ provided $\sin n\theta=0$;	
  	\item [(iii)]
 If $a\in S_3$ then $a^{2k}\in S_1$ and  $a^{2k+1}\in S_3$ for $k\in \bz$.

  \end{itemize}
\end{pro}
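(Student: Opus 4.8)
The plan is to prove Proposition \ref{type} as a direct corollary of the De Moivre formulas in Theorem \ref{dmoiv} and the formulas for $V_{a^n}$ in Proposition \ref{Vcase}, keeping in mind that membership of an element $b\in C\ell_2$ in one of the sets $S_1,\dots,S_5$ is decided purely by the signs of $I_b$ and $V_b$, together with the sign relations $I_b=b_0^2-V_b$ and the identity $I_{a^n}=I_a^n$ from Proposition \ref{pro1.1}(i).

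\textbf{Case $i=1$.} For $a\in S_1$ we have $I_a>0$, hence $I_{a^n}=I_a^n>0$, so $a^n\notin S_4$. By Proposition \ref{Vcase}(i), $V_{a^n}=I_a^n(\sinh n\theta)^2\ge 0$, and since also $I_{a^n}>0$ forces $V_{a^n}=a_0(a^n)^2 - I_{a^n}$... more cleanly: if $V_{a^n}=0$ then $a^n\in S_5$, but then $I_{a^n}=(a^n)_0^2\ge 0$ is consistent, so I must rule this out. Here $\sinh n\theta=0$ only when $\theta=0$ (as $n\ge 1$), i.e. $\sinh\theta=\sqrt{V_a}/N_a=0$, contradicting $V_a>0$. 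Hence $V_{a^n}>0$ and $a^n\in S_1$. For $i=4$: $a\in S_4$ means $I_a=0$, so $I_{a^n}=I_a^n=0$ and $a^n\in S_4$ immediately. For $i=5$: $a\in S_5$ means $V_a=0$; by Proposition \ref{Vcase}(v), $V_{a^n}=0$, so $a^n\in S_5$. This proves (i).

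\textbf{Case $i=2$.} For $a=N_a(\cos\theta+\varepsilon\sin\theta)\in S_2$ we have $I_a>0$, so $I_{a^n}=I_a^n>0$ and $a^n\notin S_4$. By Proposition \ref{Vcase}(ii), $V_{a^n}=-I_a^n(\sin n\theta)^2\le 0$. If $\sin n\theta\ne 0$ then $V_{a^n}<0$, and together with $I_{a^n}>0$ this gives $a^n\in S_2$. If $\sin n\theta=0$ then $V_{a^n}=0$, so $a^n\in S_5$ (and since $I_{a^n}\ne 0$, in fact $a^n\in S_5-S_4$). This proves (ii). For $i=3$: from Theorem \ref{dmoiv}(iii), $a^{2k}=N_a^{2k}(\cosh 2k\theta+\varepsilon\sinh 2k\theta)$ has real part $N_a^{2k}\cosh 2k\theta>0$ and, by Proposition \ref{Vcase}(iii), $V_{a^{2k}}=I_a^{2k}(\sinh 2k\theta)^2$; here $I_a<0$ so $I_a^{2k}>0$, and $\sinh 2k\theta=0$ would force $\theta=0$, i.e. $a_0=N_a\sinh\theta=0$, contradicting $I_a<0$ (which only requires $V_a>0$, so I should instead note $\sinh 2k\theta = 0 \iff \theta=0$, but actually $k$ could be $0$; for $k\ne 0$ this gives $V_{a^{2k}}>0$, and $I_{a^{2k}}=I_a^{2k}>0$, so $a^{2k}\in S_1$; the case $k=0$ gives $a^0=1\in S_1$ trivially). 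Similarly $a^{2k+1}=N_a^{2k+1}(\sinh(2k+1)\theta+\varepsilon\cosh(2k+1)\theta)$ has $V_{a^{2k+1}}=I_a^{2k+1}(\cosh(2k+1)\theta)^2$ with $I_a^{2k+1}<0$ since $I_a<0$, so $V_{a^{2k+1}}<0$, while $I_{a^{2k+1}}=I_a^{2k+1}<0$; combined with $V_{a^{2k+1}}<0$... wait, $S_3$ requires $I<0$ and $V>0$. Here $V_{a^{2k+1}}<0$, so I need to recheck: actually $V_{a^{2k+1}}=I_a^{2k+1}(\cosh(2k+1)\theta)^2$ with $I_a^{2k+1}<0$ gives $V_{a^{2k+1}}<0$, contradicting $a^{2k+1}\in S_3$.

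\textbf{Main obstacle.} The delicate point, and where I expect to spend the most care, is exactly the bookkeeping of signs in case (iii): reconciling Proposition \ref{Vcase}(iii) with the claim $a^{2k+1}\in S_3$. For $a\in S_3$ we have $\sinh\theta=a_0/N_a$ and $\cosh\theta=\sqrt{V_a}/N_a$, so $N_a^2=V_a-a_0^2=-I_a=|I_a|$; then $I_{a^{2k+1}}=I_a^{2k+1}=(-N_a^2)^{2k+1}=-N_a^{4k+2}<0$, and the correct value of $V_{a^{2k+1}}$ must be $a_0(a^{2k+1})^2-I_{a^{2k+1}}$ evaluated from the De Moivre form, namely $V_{a^{2k+1}}=N_a^{2(2k+1)}(\cosh(2k+1)\theta)^2>0$ — i.e. one should read Proposition \ref{Vcase}(iii) with the \emph{positive} quantity $(N_a^2)^{2k+1}=N_a^{4k+2}$ in place of the bracketed $I_a^{2k+1}$, or equivalently note $|I_a|^{2k+1}(\cosh(2k+1)\theta)^2$. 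With $V_{a^{2k+1}}>0$ and $I_{a^{2k+1}}<0$ we indeed get $a^{2k+1}\in S_3$; and for even powers $V_{a^{2k}}=N_a^{4k}(\sinh 2k\theta)^2\ge 0$ with $I_{a^{2k}}=N_a^{4k}>0$, and $\sinh 2k\theta=0$ only when $k=0$, in which case $a^0=1\in S_1$, while for $k\ne 0$ we get $V_{a^{2k}}>0$, hence $a^{2k}\in S_1$ in all cases. Once this sign subtlety is pinned down, (iii) follows, and the proposition is complete; the whole argument is assembled by combining Theorem \ref{dmoiv}, Proposition \ref{Vcase}, and the multiplicativity $I_{a^n}=I_a^n$.
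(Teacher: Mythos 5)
Your overall strategy --- reading off the signs of $I_{a^n}$ and $V_{a^n}$ from Theorem \ref{dmoiv}, Proposition \ref{Vcase} and the multiplicativity $I_{a^n}=I_a^n$ --- is exactly what the paper intends (it states this proposition without proof), and your treatment of (i), (ii) and of the odd powers in (iii) is sound. In particular you are right that Proposition \ref{Vcase}(iii) as printed carries the wrong sign for odd $n$: since $I_a=-N_a^2<0$ on $S_3$, the correct value is $V_{a^{2k+1}}=|I_a|^{2k+1}\big(\cosh(2k+1)\theta\big)^2>0$, which together with $I_{a^{2k+1}}=I_a^{2k+1}<0$ gives $a^{2k+1}\in S_3$ as claimed.

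The genuine gap is in the even-power half of (iii). You assert that $\sinh 2k\theta=0$ ``only when $k=0$,'' but for $a\in S_3$ the parameter $\theta$ is fixed by $\sinh\theta=a_0/N_a$, and nothing in the definition of $S_3$ (namely $I_a<0$ and $V_a>0$) forces $a_0\neq 0$: the element $a=\be_1$ lies in $S_3$ with $\theta=0$. For such $a$ one has $a^2=\Im(a)^2=V_a>0$, a positive real number, so $V_{a^{2k}}=0$ and $a^{2k}\in S_5$, not $S_1$. Thus the step fails, and it fails because statement (iii) itself is false without the extra hypothesis $\Re(a)\neq 0$; the conclusion that is actually provable in general is $a^{2k}\in S_1\cup S_5$. (A smaller slip of the same kind: $a^0=1$ has $V_1=0$, so $1\in S_5$ rather than $S_1$.) Note the contrast with your correct argument for $S_1$: there $V_a>0$ forces $\sinh\theta\neq 0$, hence $\theta\neq 0$ and $V_{a^n}>0$; in $S_3$ the positivity of $V_a$ controls $\cosh\theta$ instead, so no such conclusion about $\theta$ is available, and the degenerate case $\theta=0$ must be excluded by hypothesis rather than derived.
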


We now are ready to find the root of the equation  $w^n=a$ with $n\geq 2$.

\begin{thm}\label{prot} Let $a=N_a(\cosh\theta+\varepsilon\sinh\theta)\in S_1$.

\begin{itemize}
	\item [(i)]	
	If $n$ is  odd  then the equation $w^n=a$ has only one root $$w=\sqrt[n]{N_a}(\cosh\frac{\theta}{n}+\varepsilon\sinh\frac{\theta}{n});$$
	\item [(ii)] If $n$ is  even  then the equation $w^n=a$ has  two  roots $$w=\sqrt[n]{N_a}(\cosh\frac{\theta}{n}+\varepsilon\sinh\frac{\theta}{n})$$
	and $$w=\sqrt[n]{N_a}(\sinh\frac{\theta}{n}+\varepsilon\cosh\frac{\theta}{n}).$$
\end{itemize}

\end{thm}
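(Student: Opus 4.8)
The plan is to run the De Moivre machinery of Theorem~\ref{dmoiv} together with the ``type'' dichotomy of Proposition~\ref{type}. First I would check that the displayed elements really are roots. Put $w_0=\sqrt[n]{N_a}\big(\cosh\tfrac{\theta}{n}+\varepsilon\sinh\tfrac{\theta}{n}\big)$. The hypothesis $a\in S_1$ forces $V_a>0$, hence $\theta\neq0$, $\Re(w_0)>0$, $V_{w_0}=N_a^{2/n}\sinh^2\tfrac{\theta}{n}>0$ and $I_{w_0}=N_a^{2/n}>0$; thus $w_0\in S_1$ is already presented in the canonical form of Theorem~\ref{dmoiv}(i) with norm $\sqrt[n]{N_a}$, angle $\theta/n$ and axis $\varepsilon\in E_1$, and that theorem gives $w_0^{\,n}=N_a(\cosh\theta+\varepsilon\sinh\theta)=a$. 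For even $n$, $w_1=\sqrt[n]{N_a}\big(\sinh\tfrac{\theta}{n}+\varepsilon\cosh\tfrac{\theta}{n}\big)$ has $I_{w_1}=-N_a^{2/n}<0$ and $V_{w_1}=N_a^{2/n}\cosh^2\tfrac{\theta}{n}>0$, so $w_1\in S_3$ with norm $\sqrt[n]{N_a}$, angle $\theta/n$ and axis $\varepsilon$, and the even-exponent branch of Theorem~\ref{dmoiv}(iii) gives $w_1^{\,n}=N_a(\cosh\theta+\varepsilon\sinh\theta)=a$.

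Next I would argue that there are no further roots. Let $w^n=a$. Since $C\ell_2=\bigcup_{i=1}^5 S_i$, $w$ lies in some $S_i$, and most cases are excluded by Proposition~\ref{type} and the disjointness relations among the $S_i$: if $w\in S_2$ then $w^n\in S_2\cup S_5$; if $w\in S_4$ or $w\in S_5$ then $w^n\in S_4$ or $w^n\in S_5$ (and $w^n=0$ for $w\in E_0$, $n\ge 2$); none of these sets meets $S_1$. If $w\in S_3$ then $w^n\in S_3$ for odd $n$, again disjoint from $S_1$, whereas $w^n\in S_1$ only for even $n$. Hence a root must lie in $S_1$, and may lie in $S_3$ only when $n$ is even.

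If $w\in S_1$, write $w=N_w\big(\cosh\phi+\delta\sinh\phi\big)$ with $\delta\in E_1$ by Theorem~\ref{dmoiv}(i); then $w^n=N_w^{\,n}(\cosh n\phi+\delta\sinh n\phi)$, and comparison with $a=N_a(\cosh\theta+\varepsilon\sinh\theta)$ proceeds in two moves. Applying the map $I$ gives $N_w^{\,n}=N_a$, so $N_w=\sqrt[n]{N_a}$; then $\cosh n\phi=\cosh\theta$ and $\delta\sinh n\phi=\varepsilon\sinh\theta$. Since $\sinh\theta=\sqrt{V_a}/N_a>0$, the second identity forces $\sinh n\phi\neq0$, and combined with $\cosh n\phi=\cosh\theta$ this gives $n\phi=\pm\theta$. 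If $n\phi=\theta$ then $\delta=\varepsilon$ and $w=w_0$; if $n\phi=-\theta$ then $\delta=-\varepsilon$, and since $\cosh$ is even while $\sinh$ is odd the two sign flips cancel in the canonical form, so again $w=w_0$. This settles (i). For even $n$ one repeats the scheme inside $S_3$: write $w=N_w(\sinh\phi+\delta\cosh\phi)$; the even branch of Theorem~\ref{dmoiv}(iii) gives $w^n=N_w^{\,n}(\cosh n\phi+\delta\sinh n\phi)$, applying $I$ forces $N_w=\sqrt[n]{N_a}$, and the same comparison (again using $\sinh\theta>0$) pins down $\phi$ and $\delta$ up to sign, producing $w_1$; together with the $S_1$ count this gives (ii).

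I expect the completeness step to be the real obstacle. The $S_i$-dichotomy must be applied with care, since the five sets are not pairwise disjoint and one leans on the explicit intersection relations; and the sign ambiguity coming from $\cosh n\phi=\cosh\theta$ has to be tracked so that roots are neither lost nor double-counted. The point that needs the closest attention is how the choice $n\phi=-\theta$ interacts with the parity of $n$ in the $S_3$ branch, where the even-power De Moivre formula is insensitive to $w\mapsto -w$. By comparison, the De Moivre substitutions and the norm computation $N_w=\sqrt[n]{N_a}$ are routine.
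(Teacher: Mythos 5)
Your overall strategy is the same as the paper's (restrict the possible type of $w$ via Proposition~\ref{type}, then compare canonical forms via Theorem~\ref{dmoiv}), and you are in fact more careful than the paper in letting the root's axis $\delta$ differ a priori from $\varepsilon$. But two steps in your completeness argument fail, and they fail in a way that matters: part (ii), as you have argued it (and as the paper states it), undercounts the roots. First, the parametrization $w=N_w(\cosh\phi+\delta\sinh\phi)$ does not cover all of $S_1$: since $\cosh\phi\ge 1$, it only reaches elements with $\Re(w)>0$. The element $-w_0\in S_1$ has negative real part, escapes your case analysis, and satisfies $(-w_0)^n=w_0^n=a$ for even $n$. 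Second, your claim that in the $S_3$ branch the choice $n\phi=-\theta$, $\delta=-\varepsilon$ pins down the same root is wrong: there the sign flips do \emph{not} cancel, because $\sinh$ is odd while the axis multiplies the even function $\cosh$; one gets $N_w\big(\sinh(-\theta/n)+(-\varepsilon)\cosh(-\theta/n)\big)=-w_1$, which is again a genuine root for even $n$ (it lies in $S_3$, in canonical form with axis $-\varepsilon\in E_1$ and parameter $-\theta/n$). A concrete check: for $a=\cosh 2+\be_1\sinh 2$ the equation $w^2=a$ has the four distinct solutions $\pm(\cosh 1+\be_1\sinh 1)$ and $\pm(\sinh 1+\be_1\cosh 1)$.

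So the even case genuinely has (at least) four roots $\pm w_0,\pm w_1$, not two, and (ii) cannot be proved as stated. Your proof of (i) survives, since for odd $n$ a root with $\Re(w)<0$ would force $\Re(w^n)<0$, contradicting $\Re(a)=N_a\cosh\theta>0$; but you should say this explicitly rather than assume the canonical form outright. The paper's own proof is even terser and silently makes both of the assumptions you make --- same axis and positive real part --- so the defect originates in the source; still, your writeup asserts the sign cancellation in the $S_3$ branch as if it had been checked, and that is exactly where the argument breaks.
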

\begin{proof} If $n$ is odd, by Theorem \ref{dmoiv} and Proposition \ref{type},  we assume that $w=N(\cosh \varphi+\varepsilon\sinh \varphi)$ and therefore $$w^n=N^n(\cosh{n\varphi}+\varepsilon\sinh{n\varphi}).$$
Thus $N^n=N_a$ and $\varphi=\frac{\theta}{n}$. This proves (i).

   If $n$ is even,   we assume that $w=N(\cosh \varphi+\varepsilon\sinh \varphi)\in S_1$ or $w=N(\sinh \varphi+\varepsilon\cosh \varphi)\in S_3$.
   If $w\in S_1$ then $w=\sqrt[n]{N_a}(\cosh\frac{\theta}{n}+\varepsilon\sinh\frac{\theta}{n})$;
      If $w\in S_3$ then $w=\sqrt[n]{N_a}(\sinh\frac{\theta}{n}+\varepsilon\cosh\frac{\theta}{n})$.
      This proves (ii).
\end{proof}

\begin{exam}  Let $a=\sqrt{2}+7\be_1+4\be_2+8\be_3\in S_1$. Then the solution of $w^3=a$ is
$$w=\cosh\frac{\ln(1+\sqrt{2})}{3}+\varepsilon\sinh\frac{\ln(1+\sqrt{2})}{3}$$
and the two solutions of $w^2=a$ are
$$w_1=\cosh\frac{\ln(1+\sqrt{2})}{3}+\varepsilon\sinh\frac{\ln(1+\sqrt{2})}{3}$$
and
$$w_2=\sinh\frac{\ln(1+\sqrt{2})}{3}+\varepsilon\cosh\frac{\ln(1+\sqrt{2})}{3},$$
where $\varepsilon=7\be_1+4\be_2+8\be_3$.

\end{exam}

\begin{thm}\label{prot} Let $a=N_a(\cos\theta+\varepsilon\sin\theta)\in S_2$. Then the equation $w^n=a$ has $n$ roots:
$$w=\sqrt[n]{N_a}(\cos\frac{\theta+2m\pi}{n}+\varepsilon\sin\frac{\theta+2m\pi}{n})$$
for $m=0,1,2,\cdots,n-1$.
\end{thm}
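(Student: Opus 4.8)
The plan is to mirror the classical complex-analytic argument, using the De Moivre formula of Theorem \ref{dmoiv}(ii) together with the fact (Proposition \ref{type}(ii)) that powers of elements of $S_2$ stay inside $S_2$ unless the relevant sine vanishes. First I would verify that each proposed $w$ is indeed a root. Writing $w=\sqrt[n]{N_a}(\cos\varphi_m+\varepsilon\sin\varphi_m)$ with $\varphi_m=\frac{\theta+2m\pi}{n}$, note that $w\in S_2$ (its $V$-value is $-(\sqrt[n]{N_a})^2\sin^2\varphi_m\le 0$, and when it is $0$ one still checks directly that $w^n=a$ because then $w$ is real and $a$ is forced to be real as well). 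Applying \eqref{r1}, $w^n=N_a(\cos n\varphi_m+\varepsilon\sin n\varphi_m)=N_a(\cos(\theta+2m\pi)+\varepsilon\sin(\theta+2m\pi))=N_a(\cos\theta+\varepsilon\sin\theta)=a$. So all $n$ listed elements are roots.

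Next I would show there are no others and that the listed $n$ are distinct. For distinctness: if $\varphi_m\equiv\varphi_{m'}\pmod{2\pi}$ with $0\le m,m'\le n-1$, then $\frac{2(m-m')\pi}{n}\in 2\pi\bz$, forcing $m=m'$; hence the $n$ points $(\cos\varphi_m,\sin\varphi_m)$ on the unit circle are distinct, and since $\varepsilon$ is fixed the corresponding $w$'s are distinct (the only degenerate case is $\sin\varphi_m=0$ for two different $m$, but that requires $\varphi_m=0,\pi$ simultaneously mod $2\pi$, i.e. $n\mid (m-m')$ again up to the usual check). For completeness: suppose $w^n=a$. Since $a\in S_2$, $I_{a}>0$, and by Proposition \ref{pro1.1}(i) $I_{w^n}=I_w^n=I_a>0$, so $I_w>0$; also $V_{w^n}=-I_w^n\sin^2(n\psi)$ by Proposition \ref{Vcase} if $w\in S_2$ with $w=N_w(\cos\psi+\varepsilon'\sin\psi)$, and $V_a=-N_a^2\sin^2\theta$. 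The key point is that $w$ cannot lie in $S_1$ or $S_3$: by Proposition \ref{type}(i),(iii) an $n$-th power of such an element lands in $S_1\cup S_3\cup S_5$, never in $S_2$ (one must rule out $S_5$ too, but $a\in S_2$ has $V_a<0$ so $a\notin S_5$). Likewise $w\notin S_4$ (else $a\in S_4$). Also $w\notin S_5$: an element of $S_5$ has $V_w=0$, so $V_{w^n}=0$ by Proposition \ref{Vcase}(v), contradicting $V_a<0$ — unless $\sin\theta=0$, the one boundary case to handle separately by observing that then $a$ is real positive and the $n$ real/$E_{-1}$-roots are exactly the classical ones. Hence $w\in S_2$, write $w=\sqrt[n]{N_w}(\cos\psi+\varepsilon'\sin\psi)$; then \eqref{r1} gives $N_w^n(\cos n\psi+\varepsilon'\sin n\psi)=N_a(\cos\theta+\varepsilon\sin\theta)$, so $N_w^n=N_a$ (taking $I$ of both sides, using $I$ multiplicative), whence $N_w=\sqrt[n]{N_a}$; then $\cos n\psi=\cos\theta$, $\varepsilon'\sin n\psi=\varepsilon\sin\theta$, which forces $\varepsilon'=\varepsilon$ (when $\sin\theta\ne0$) and $n\psi\equiv\theta\pmod{2\pi}$, i.e. $\psi=\frac{\theta+2m\pi}{n}$ for some $m\in\bz$, and reducing $m$ mod $n$ gives one of the listed roots.

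The main obstacle I anticipate is the careful bookkeeping of the degenerate cases where $\sin\theta=0$ (so $a$ is a positive real and technically could also be viewed through $S_5$ or even through $E_0$-type decompositions) and where some $\sin\varphi_m=0$, making a purported root real and hence its ``$\varepsilon$-part'' ill-defined — in that situation the decomposition $w=\sqrt[n]{N_a}(\cos\varphi_m+\varepsilon\sin\varphi_m)$ is still a valid expression but $\varepsilon$ is a free choice. One must argue that this does not produce spuriously many or too few roots: the count of geometrically distinct $w$ in $C\ell_2$ is still exactly $n$ because the real roots, when they occur, occur with the same multiplicity pattern as in $\bc$. A clean way to sidestep most of this is to treat the generic case $\sin\theta\ne0$ in full as above, and dispatch $\sin\theta=0$ (equivalently $a=N_a$ real positive, $\theta\in\{0,\pi\}$) by a short separate paragraph, noting that the formula still lists $n$ distinct right-hand sides and each satisfies $w^n=a$ by the direct computation in the first paragraph, while any root must satisfy $I_w=\sqrt[n]{I_a}$ and $\Re(w^n)=\Re(a)$, pinning $\psi$ to the stated values.
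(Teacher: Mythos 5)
Your proposal is correct and follows essentially the same route as the paper's (much terser) proof --- reduce to $w\in S_2$ via Proposition \ref{type}, write $w=N(\cos\psi+\varepsilon\sin\psi)$, and apply De Moivre --- with your added verification, distinctness and exclusion arguments merely making explicit what the paper leaves implicit; note also that the degenerate cases you set aside are vacuous, since $a\in S_2$ forces $V_a<0$ and hence $\sin\theta=\sqrt{-V_a}/N_a>0$, which in turn rules out $\sin\varphi_m=0$. One small imprecision: $\varepsilon'\sin n\psi=\varepsilon\sin\theta$ with $\varepsilon,\varepsilon'\in E_{-1}$ only forces $\varepsilon'=\pm\varepsilon$ (with $n\psi\equiv\mp\theta \pmod{2\pi}$ respectively), but the branch $\varepsilon'=-\varepsilon$ reproduces exactly the same $n$ roots under $\psi\mapsto-\psi$, so your enumeration is unaffected.
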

\begin{proof}  By  Proposition \ref{type},  we assume that $w=N(\cos\varphi+\varepsilon\sin\varphi)$. Then $$w^n=N^n(\cos n\varphi+\varepsilon\sin n\varphi).$$
	Therefore
$$N_a=N^n, \, \cos{n\varphi}=\cos\theta \ \ and \ \  \sin{n\varphi}=\sin\theta$$
By using of Theorem \ref{dmoiv}, the $n$th roots of $a$ are
$$w_m=\sqrt[n]{N_a}(\cos\frac{\theta+2m\pi}{n}+\varepsilon\sin\frac{\theta+2m\pi}{n})$$
for $m=0,1,2,\cdots,n-1$.
\end{proof}

\begin{exam} Let $a=1-\be_3\in S_2$. Then the solutions of $w^4=a$ are
$$w_m=\cos\frac{\frac{\pi}{4}+2m\pi}{4}+\varepsilon\sin\frac{\frac{\pi}{4}+2m\pi}{4},\,\,\,\,\,m=0,1,2,3,$$
where $\varepsilon=-\be_3$.

\end{exam}

\begin{thm}\label{prot} Let $a=N_a(\sinh\theta+\varepsilon\cosh\theta)\in S_3$. Then the equation $w^n=a$ has
\begin{itemize}
	\item [(i)]	
	 no root  if $n$ is an even number;
	
	\item [(ii)]	 only one  root $w=\sqrt[n]{N_a}(\sinh\frac{\theta}{n}+\varepsilon\cosh\frac{\theta}{n})$  if $n$ is an odd number.
	\end{itemize}
\end{thm}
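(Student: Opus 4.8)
The plan is to first pin down where any solution $w$ of $w^n=a$ can live, and then split on the parity of $n$.

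First I would argue that every solution $w$ must lie in $S_3$. Since $C\ell_2=\cup_{i=1}^{5}S_i$, the element $w$ belongs to one of the five sets, and I claim that only membership in $S_3$ is compatible with $w^n=a\in S_3$. Indeed, by Proposition \ref{type}: if $w\in S_1$ then $w^n\in S_1$; if $w\in S_2$ then $w^n\in S_2\cup S_5$; if $w\in S_4$ then $I_{w^n}=I_w^n=0$ so $w^n\in S_4$; and if $w\in S_5$ then $V_{w^n}=0$ by Proposition \ref{Vcase}(v) so $w^n\in S_5$. None of $S_1,S_2,S_4,S_5$ meets $S_3$, because $S_3$ is characterized by $I_a<0$ and $V_a>0$, while $S_1,S_2$ require $I_a>0$, $S_4$ requires $I_a=0$, and $S_5$ requires $V_a=0$. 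Hence $w\in S_3$.

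Next, the even case. If $n=2k$, then Proposition \ref{type}(iii) gives $w\in S_3\Rightarrow w^{2k}\in S_1$; but $S_1\cap S_3=\emptyset$, so $w^n=a$ is impossible. This proves (i). For the odd case $n=2k+1$, since $w\in S_3$, Theorem \ref{dmoiv}(iii) lets me write $w=N(\sinh\varphi+\varepsilon'\cosh\varphi)$ with $N=N_w>0$, $\cosh\varphi=\sqrt{V_w}/N_w>0$, and $\varepsilon'=\Im(w)/\sqrt{V_w}\in E_1$; then $w^n=N^n(\sinh n\varphi+\varepsilon'\cosh n\varphi)$. Comparing with $a=N_a(\sinh\theta+\varepsilon\cosh\theta)$ and using $I_{w^n}=I_w^n$ gives $N^n=N_a$, i.e.\ $N=\sqrt[n]{N_a}$. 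The real parts then give $\sinh n\varphi=\sinh\theta$, whence (using that $\cosh$ is always positive) $\cosh n\varphi=\cosh\theta$; consequently $N^n\cosh n\varphi=N_a\cosh\theta$, and the imaginary parts $N^n\cosh(n\varphi)\varepsilon'=N_a\cosh\theta\,\varepsilon$ force $\varepsilon'=\varepsilon$. Finally $\sinh n\varphi=\sinh\theta$ together with $\cosh n\varphi=\cosh\theta$ forces $n\varphi=\theta$, so $\varphi=\theta/n$. Thus the only candidate is $w=\sqrt[n]{N_a}\big(\sinh\frac{\theta}{n}+\varepsilon\cosh\frac{\theta}{n}\big)$; conversely this element lies in $S_3$ (one checks $I_w=-(\sqrt[n]{N_a})^2<0$ and $V_w=(\sqrt[n]{N_a}\cosh\frac{\theta}{n})^2>0$) and, by Theorem \ref{dmoiv}(iii) with $n$ odd, its $n$th power equals $a$. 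This proves (ii).

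I expect the step requiring the most care to be the elimination argument of the first paragraph: one must run through all five possibilities for $w$, invoking Proposition \ref{type} together with the multiplicativity $I_{w^n}=I_w^n$ and the vanishing $V_{w^n}=0$ on $S_5$, to confine $w$ to $S_3$. After that the parity split is immediate, and the odd-case computation is routine, the only subtlety there being to extract $\varepsilon'=\varepsilon$ and $n\varphi=\theta$ from the scalar equations using the positivity of $\cosh$ and of the norms.
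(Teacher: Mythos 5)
Your proof is correct and follows essentially the same route as the paper: Proposition~\ref{type} rules out even $n$ and confines any root $w$ to $S_3$, after which the De Moivre representation pins down $N$ and $\varphi$. You are in fact more careful than the paper's own (very terse) proof, which simply posits a root of the form $N(\sinh\varphi+\varepsilon\cosh\varphi)$ with the same $\varepsilon$ as $a$, whereas you derive $\varepsilon'=\varepsilon$ from the positivity of $\cosh$ and also verify that the candidate really is a root.
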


\begin{proof} By Proposition \ref{type}, if $n$ is an even number then $w^n=a$  has no root.  For the case of $n$ being odd, let
 $w=N(\sinh\varphi+\varepsilon\cosh\varphi)$ be a root of the equation $w^n=a$. Then
	$$w^n=N^n(\sinh{n\varphi}+\varepsilon\cosh{n\varphi})=N_a(\sinh\theta+\varepsilon\cosh\theta)$$
	and we have $\varphi=\frac{\theta}{n}$ and $N=\sqrt[n]{N_a}$.
\end{proof}
\begin{exam}
Let $a=1+\be_1-\be_2\in S_3$. Then the solution of $w^3=a$ is
$$w=\sinh\frac{\ln(1+\sqrt{2})}{3}+\varepsilon\cosh\frac{\ln(1+\sqrt{2})}{3},$$ where $\varepsilon=\frac{\be_1-\be_2}{\sqrt{2}}$.
\end{exam}

\begin{thm}\label{thms4}Let $a=a_0+\Im(a)\in S_4$. Then the equation $w^n=a$ has solutions:
	\begin{itemize}
		\item [(i)]	
		$w=\sqrt[n]{\frac{a_0}{2^{n-1}}}+\frac{\Im(a)}{2^{n-1}\left(\sqrt[n]{\frac{a_0}{2^{n-1}}}\right)^{n-1}}$ if $a_0\neq 0$ and  $n$\mbox{ is  odd};
		
		\item [(ii)]	$w=\sqrt[n]{\frac{a_0}{2^{n-1}}}+\frac{\Im(a)}{2^{n-1}\left(\sqrt[n]{\frac{a_0}{2^{n-1}}}\right)^{n-1}}$ if $a_0>0$ and  $n$\mbox{ is  even};
		
		\item [(iii)]	$w\in E_0$ if $a=0$ and $n\geq 2$.
			\end{itemize}
	\end{thm}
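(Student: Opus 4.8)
The plan is to deduce everything from Theorem~\ref{dmoiv}(iv), which describes the powers of elements of $S_4$. First I would observe that every root of $w^n=a$ automatically lies in $S_4$: by Proposition~\ref{pro1.1}(i), $(I_w)^n=I_{w^n}=I_a=0$, and since $I_w$ is real this forces $I_w=0$. Writing $w=w_0+\Im(w)$, Theorem~\ref{dmoiv}(iv) turns the equation $w^n=a$ into $(2w_0)^{n-1}\bigl(w_0+\Im(w)\bigr)=a_0+\Im(a)$; since $(2w_0)^{n-1}$ is a real scalar, comparing the $\be_0$-component with the imaginary part yields $2^{n-1}w_0^n=a_0$ and $(2w_0)^{n-1}\Im(w)=\Im(a)$.

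Next, assume $a_0\neq0$. Then $w_0\neq0$ and $w_0^n=a_0/2^{n-1}$. For odd $n$ this has the unique real solution $w_0=\sqrt[n]{a_0/2^{n-1}}$ for every $a_0\neq0$; for even $n$ it is solvable over $\br$ precisely when $a_0>0$, with $w_0=\pm\sqrt[n]{a_0/2^{n-1}}$. In all cases $(2w_0)^{n-1}\neq0$, so $\Im(w)=\Im(a)/(2w_0)^{n-1}$, and substituting $w_0=\sqrt[n]{a_0/2^{n-1}}$ together with $(2w_0)^{n-1}=2^{n-1}\bigl(\sqrt[n]{a_0/2^{n-1}}\bigr)^{n-1}$ gives exactly the element $w$ displayed in (i) and (ii). Conversely I would verify that this $w$ really lies in $S_4$, so that Theorem~\ref{dmoiv}(iv) applies to it: from $I_a=0$ we have $V_a=a_0^2$, hence using $V_w=V_{\Im(w)}$ and Proposition~\ref{pro1.1}(ii), $V_w=V_a/(2w_0)^{2(n-1)}=\bigl(2^{n-1}w_0^n\bigr)^2/(2w_0)^{2(n-1)}=w_0^2$, so $I_w=w_0^2-V_w=0$; then $w^n=(2w_0)^{n-1}w=2^{n-1}w_0^n+\Im(a)=a_0+\Im(a)=a$. (For even $n$ the choice $w_0=-\sqrt[n]{a_0/2^{n-1}}$ produces, in the same way, the further root $-w$, which I would note for completeness.)

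Finally, assume $a=0$ and $n\ge2$. The scalar equation reads $2^{n-1}w_0^n=0$, so $w_0=0$ and $w=\Im(w)\in V$; since $w\in S_4$ and $w_0=0$ this gives $V_w=-I_w=0$, i.e. $w\in E_0$. Conversely, by Theorem~\ref{dmoiv}(v), every $w\in E_0$ satisfies $w^n=\Im(w)^n=0=a$ for all $n\ge2$. Hence the solutions are exactly the elements of $E_0$, which is statement (iii).

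The argument presents no serious obstacle; the only delicate points are the case analysis determining when $w_0^n=a_0/2^{n-1}$ is solvable over $\br$ (the parity of $n$ versus the sign of $a_0$) and the verification in the middle step that the candidate $w$ lands back inside $S_4$ — and not in $S_3$ or $S_5$ — so that Theorem~\ref{dmoiv}(iv) may be legitimately re-applied to it. Everything else is routine coefficient comparison.
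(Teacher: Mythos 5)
Your proof is correct and follows essentially the same route as the paper: reduce $w^n=a$ to the scalar equations $2^{n-1}w_0^n=a_0$ and $(2w_0)^{n-1}\Im(w)=\Im(a)$ via Theorem~\ref{dmoiv}(iv) and solve by cases on the parity of $n$ and the sign of $a_0$. You are in fact somewhat more careful than the paper, which only argues the necessity direction: your use of $I_{w^n}=I_w^n$ to force $w\in S_4$, your check that the candidate $w$ lands back in $S_4$ so that the power formula legitimately applies to it, and your observation that $-w$ is a second root when $n$ is even are all correct additions that the paper's proof omits.
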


\begin{proof}
By Proposition \ref{type},  we assume that $w=w_0+\Im(w)\in S_4$ is a root of  $w^n=a$.
By Theorem \ref{dmoiv} we have
$$w^n=(2w_0)^{n-1}(w_0+\Im(w))=a_0+\Im(a).$$Therefore
\begin{equation}\label{s4eq}2^{n-1}w_0^n=a_0,\, \, (2w_0)^{n-1}\Im(w)=\Im(a).\end{equation}
If $n$ is even, we need $a_0\ge 0$. If $a_0>0$ then we have $$w_0=\sqrt[n]{\frac{a_0}{2^{n-1}}},\,\, \Im(w)=\frac{\Im(a)}{2^{n-1}(\sqrt[n]{\frac{a_0}{2^{n-1}}})^{n-1}}.$$
This proves (i).

If $a_0=0$ then $a\in S_4\cap S_5=E_0$ and $w_0=0$. By (\ref{s4eq}) and we need $\Im(a)=0.$  Thus $\Im(w)\in S_4$.  So each $w\in E_0$ is a root of $w^n=0$. This proves (iii).

If $n$ is odd with $a_0\neq 0$ then   $$w_0=\sqrt[n]{\frac{a_0}{2^{n-1}}},\,\, \Im(w)=\frac{\Im(a)}{2^{n-1}(\sqrt[n]{\frac{a_0}{2^{n-1}}})^{n-1}}.$$
This proves (ii).
\end{proof}

\begin{exam} Let $a=2+5\be_1+10\be_2+11\be_3\in S_4$. Then	\begin{itemize}
		\item [(i)]	the solution of $w^3=a$ is
$$w=\sqrt[3]{\frac{1}{2}}+\frac{1}{2}\sqrt[3]{\frac{1}{2}}(5\be_1+10\be_2+11\be_3);$$
\item [(ii)]	the solution of $w^2=a$ is
$$w=\frac{2+5\be_1+10\be_2+11\be_3}{2}.$$
\end{itemize}
\end{exam}

\begin{thm}\label{thms5} Let $a=a_0+a_1\be_1+a_2\be_2+a_3\be_3\in S_5-\{0\}$.  Then the equation $w^n=a$ has solutions:
	\begin{itemize}
		\item [(i)]	
		$w=\sqrt[n]{a_0}+\frac{\Im(a)}{(\sqrt[n]{a_0})^{n-1}}$ if $a_0\neq 0$ and  $n$\mbox{ is  odd};
		
		\item [(ii)]	$w=\sqrt[n]{a_0}+\frac{\Im(a)}{(\sqrt[n]{a_0})^{n-1}}$ if $a_0>0$ and  $n$\mbox{ is  even};
		
			\item [(iii)]	$w=\sqrt[n]{a_0}(\cos\frac{2m\pi}{n}+\varepsilon\sin\frac{2m\pi}{n}),m=0,1,\cdots,n-1, \forall \varepsilon\in E_{-1}$ if  $a=a_0>0$;
		
	\item [(iv)]	$w=\sqrt[n]{-a_0}(\cos\frac{\pi+2m\pi}{n}+\varepsilon\sin\frac{\pi+2m\pi}{n}),m=0,1,\cdots,n-1, \forall \varepsilon\in E_{-1}$ if  $a=a_0<0$.
			\end{itemize}
\end{thm}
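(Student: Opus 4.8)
The plan is to verify each listed $w$ by a direct computation, reducing in every case to the De Moivre formulas of Theorem \ref{dmoiv} and the Euler formulas of Theorem \ref{Euler}, after first identifying which of the strata $S_1,\dots,S_5$ the candidate $w$ lies in. For parts (i) and (ii) observe that $a_0\neq0$ and, since $a\in S_5$, $V_a=0$; hence $\Im(a)\in E_0$ and, by (\ref{imim}), $\Im(a)^2=V_{\Im(a)}=V_a=0$. Put $w_0=\sqrt[n]{a_0}$, the real $n$th root of $a_0$ — which exists for every nonzero $a_0$ when $n$ is odd and requires $a_0>0$ when $n$ is even, precisely as in the hypotheses — and $\Im(w)=\Im(a)/\bigl(n\,w_0^{n-1}\bigr)$, so that $w=w_0+\Im(w)$. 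Since $\Im(w)$ is a real multiple of $\Im(a)\in E_0$, Proposition \ref{pro1.1}(ii) gives $V_{\Im(w)}=0$, hence $V_w=0$, and $w_0\neq0$ gives $I_w=w_0^2\neq0$; thus $w\in S_5-S_4$. Applying the $S_5-S_4$ case of Theorem \ref{dmoiv}(v) to $w$ then yields
$$w^n=w_0^n+n\,w_0^{n-1}\,\Im(w)=a_0+\Im(a)=a,$$
as claimed. (Conversely $I_{w^n}=I_w^n=I_a=a_0^2\neq0$ forces $w\notin S_4$, and when $\Im(a)\neq0$ Theorem \ref{dmoiv} and Proposition \ref{Vcase} further exclude roots in $S_1\cup S_2\cup S_3$, so every root lies in $S_5-S_4$ and is pinned down by matching real and imaginary parts in Theorem \ref{dmoiv}(v); this completeness is, however, more than the statement asserts.)

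For parts (iii) and (iv), $a=a_0$ is a nonzero real. For $\varepsilon\in E_{-1}$ the real multiples $\varepsilon\varphi$ and $\varepsilon\psi$ commute, so the power-series definition of the exponential gives $e^{\varepsilon\varphi}e^{\varepsilon\psi}=e^{\varepsilon(\varphi+\psi)}$, and together with Theorem \ref{Euler}(ii) this produces the identity $(\cos\varphi+\varepsilon\sin\varphi)^n=\cos n\varphi+\varepsilon\sin n\varphi$ for every $\varphi\in\br$. Hence in (iii), $w=\sqrt[n]{a_0}\bigl(\cos\tfrac{2m\pi}{n}+\varepsilon\sin\tfrac{2m\pi}{n}\bigr)$ satisfies $w^n=a_0\bigl(\cos2m\pi+\varepsilon\sin2m\pi\bigr)=a_0$; and in (iv), writing $a_0=-(-a_0)$ with $-a_0>0$, $w=\sqrt[n]{-a_0}\bigl(\cos\tfrac{(2m+1)\pi}{n}+\varepsilon\sin\tfrac{(2m+1)\pi}{n}\bigr)$ satisfies $w^n=(-a_0)\bigl(\cos(2m+1)\pi+\varepsilon\sin(2m+1)\pi\bigr)=(-a_0)(-1)=a_0$. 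Letting $m$ run over $0,\dots,n-1$ and $\varepsilon$ over $E_{-1}$ yields the asserted families.

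I do not anticipate a genuine obstacle: once De Moivre's formulas are in hand the theorem is essentially a verification. The two points that require care are (a) checking that each candidate $w$ really lies in the stratum whose De Moivre formula one invokes — in (i)/(ii) this is exactly where $\Im(a)^2=0$ enters, whereas in (iii)/(iv) it is cleanest to sidestep the stratification altogether by using the exponential law on the commutative line $\br\varepsilon$, so that one never has to treat the degenerate case $\sin\tfrac{2m\pi}{n}=0$ separately — and (b) keeping track of the scalar factors: the extra factor $n$ produced by the linearized power law on $S_5-S_4$, and the real $n$th-root sign conventions that force the parity hypotheses distinguishing (i) from (ii).
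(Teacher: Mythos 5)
Your argument is correct and, for parts (i) and (ii), follows essentially the same route as the paper: reduce to the linearized power law $w^n=w_0^n+nw_0^{n-1}\Im(w)$ of Theorem \ref{dmoiv}(v) on $S_5-S_4$ and match real and imaginary parts (your verification that the candidate $w$ really lies in $S_5-S_4$ is a point the paper glosses over). For (iii) and (iv) you verify the candidates directly via the exponential law on the commutative line $\br\varepsilon$, whereas the paper argues in the reverse direction, assuming $w=N(\cos\varphi+\varepsilon\sin\varphi)\in S_2$ by Proposition \ref{type} and solving $N^n\cos n\varphi=a_0$, $N^n\sin n\varphi\,\varepsilon=\Im(a)$. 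Your version buys a little robustness: the candidates with $\sin\frac{2m\pi}{n}=0$ are real, hence not in $S_2$, and so are not literally covered by the De Moivre formula of Theorem \ref{dmoiv}(ii); deriving $(\cos\varphi+\varepsilon\sin\varphi)^n=\cos n\varphi+\varepsilon\sin n\varphi$ for all $\varphi$ from the exponential sidesteps that. Conversely, the paper's direction is what supports the (implicit) completeness claim, which your parenthetical only sketches.

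The one substantive discrepancy: your root in (i)/(ii) is $w=\sqrt[n]{a_0}+\frac{\Im(a)}{n(\sqrt[n]{a_0})^{n-1}}$, with a factor $n$ in the denominator, while the theorem as stated (and the final display of the paper's proof, and parts (i), (ii) of the example that follows it) omits that $n$. You are right and the stated formula is not: from $nw_0^{n-1}\Im(w)=\Im(a)$ --- which is exactly the paper's own equation (\ref{s5eq}) --- one must divide by $nw_0^{n-1}$, not by $w_0^{n-1}$; with the paper's $w$ one gets $w^n=a_0+n\Im(a)\neq a$ whenever $\Im(a)\neq 0$. So your computation establishes the corrected statement rather than the literal one. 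You should flag this explicitly as an erratum in the theorem rather than burying it in the closing remark about ``keeping track of the scalar factors,'' since as written your proof and the statement assert different roots.
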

\begin{proof}
	By Proposition \ref{type}, the root $w$ may belong to $S_5$ and $S_2$.
	
	  We first assume that $w=w_0+\Im(w)\in S_5$.
	By Theorem \ref{dmoiv} we have
	$$w^n=w_0^{n}+nw_0^{n-1}\Im(w)=a_0+\Im(a).$$
Therefore
	\begin{equation}\label{s5eq}w_0^n=a_0,\, \, nw_0^{n-1}\Im(w)=\Im(a).\end{equation}
	If $n$ is even, we need $a_0\ge 0$. If $a_0>0$ then we have $$w_0=\sqrt[n]{a_0},\,\, \Im(w)=\frac{\Im(a)}{(\sqrt[n]{a_0})^{n-1}}.$$
	This proves (ii).

	If $a_0=0$ then $a\in S_4\cap S_5=E_0$ and $w_0=0$. By (\ref{s5eq}) and we need $\Im(a)=0.$ Thus $a=0$ which has been considered in Theorem \ref{thms4}.

		If $n$ is odd with $a_0\neq 0$ then   $$w_0=\sqrt[n]{a_0},\,\, \Im(w)=\frac{\Im(a)}{(\sqrt[n]{a_0})^{n-1}}.$$
	This proves (i).
		
	 We next assume that $w=N(\cos\varphi+\varepsilon\sin\varphi)\in S_2$.  Then $w^n=N^n(\cos n\varphi+\varepsilon\sin n\varphi)$ and therefore we have
	 $$N^n\cos n \varphi=a_0,\,\,  N^n\sin n \varphi\,\varepsilon  =\Im(a).$$	
	Since $\varepsilon^2=-1$ and $V_a=0$, we need  $\Im(a)=0$ and 	$\sin n \varphi=0$. So we have  $\cos  n \varphi=\pm 1$.

If  $\cos  n \varphi= 1$ then we need $a_0>0$. In this case
  $$N=\sqrt[n]{a_0},\,\, \varphi=\frac{2m\pi}{n}, m=0,1,\cdots,n-1.$$

  So the roots can be written as
  $$w_m=\sqrt[n]{a_0}(\cos\frac{2m\pi}{n}+\varepsilon\sin\frac{2m\pi}{n}),m=0,1,\cdots,n-1, \forall \varepsilon\in E_{-1}.$$
	This proves (iii).

  If  $\cos  n \varphi= -1$ then we need $a_0<0$. In this case
  $$N=\sqrt[n]{-a_0},\,\, \varphi=\frac{\pi+2m\pi}{n}, m=0,1,\cdots,n-1.$$
  So the roots can be written as
  $$w_m=\sqrt[n]{-a_0}(\cos\frac{\pi+2m\pi}{n}+\varepsilon\sin\frac{\pi+2m\pi}{n}),m=0,1,\cdots,n-1, \forall \varepsilon\in E_{-1}.$$
  	This proves (iv).

  	\end{proof}

\begin{exam} (i) Let $a=8+3\be_1-4\be_2+5\be_3\in S_5$. Then the solution of $w^3=a$ is
$$w=\frac{8+3\be_1-4\be_2+5\be_3}{4}.$$
(ii) Let $a=16+3\be_1-4\be_2+5\be_3\in S_5$. Then the solution of $w^4=a$ is
$$w=\frac{16+3\be_1-4\be_2+5\be_3}{8}.$$
(iii) Let $a=1\in S_5$. Then the solutions of $w^4=a$ are
$$w_m=\cos\frac{m\pi}{2}+\varepsilon\sin\frac{m\pi}{2},\,\,m=0,1,2,3,\forall\varepsilon\in E_{-1}.$$
(iv) Let $a=-2$. Then the solutions of $w^3=a$ are
$$w_m=\sqrt[3]{2}(\cos\frac{\pi+2m\pi}{3}+\varepsilon\sin\frac{\pi+2m\pi}{3}),\,\,m=0,1,2,\forall\varepsilon\in E_{-1}.$$

\end{exam}

\section*{Acknowledgements}

This work is supported by Natural Science Foundation of China (11871379), the Innovation Project of Department of Education of Guangdong Province (2018KTSCX231) and Key project of  Natural Science Foundation  of Guangdong Province Universities (2019KZDXM025).


\begin{thebibliography}{99}
	

\bibitem{Bra}L. Brand, The roots of a quaternion, Amer. Math. Monthly 49 (8) (1942) 519-520.
\bibitem{cao} W. Cao, Z. Chang, The Moore-Penrose inverses of split quaternions, Linear and Multilinear Algebra, 2020.

    \bibitem{Cho} E. Cho, Euler's formula and De Moivre's formula for quaternions, Missouri J. Math. Sci. 11 (2) (1999) 80-83.

\bibitem{lib11} I. Frenkel, M. Libine, Split quaternionic analysis and separation of the series for SL(2,R) and SL(2,C)/SL(2,R), Advances in Math. 228 (2011) 678-763.

\bibitem{Lou}  P. Lounesto, Clifford Algebras and spinors, 2nd ed.Cambridge: Cambridge University Press, 2001.


\bibitem{Niv}I. Niven, The roots of a quaternion, Amer. Math. Monthly 449(6)(1942) 386-388.



\bibitem{moz06} M.	Ozdemir, A. Ergin,  Rotations with unit timelike quaternions in	Minkowski 3-space. J. Geom. Phys. 56(2006) 322-336.


\bibitem{Moz}  M. \"{O}zdemir, The roots of a split quaternion, Applied Mathematics Letters  22(2009)  258-263.



	






\end{thebibliography}
\end{document}